\newcommand{\Rea}{\mathbb{R}}
\newcommand{\Nat}{\mathbb{N}}
\newcommand{\Int}{\mathbb{Z}}
\newcommand{\NN}{\mathcal{N}}
\newcommand{\DD}{\mathcal{D}}
\newcommand{\Img}{\mathrm{Img}}
\theoremstyle{plain}
\newtheorem{theorem}{\bf Theorem}[section]
\newtheorem{lemma}[theorem]{\bf Lemma}
\newtheorem{proposition}[theorem]{\bf Proposition}
\newtheorem{corollary}[theorem]{\bf Corollary}
\newtheorem{fact}[theorem]{\bf Fact}
\theoremstyle{definition}
\newtheorem{definition}[theorem]{\bf Definition}
\title[Shadowing of actions of hyperbolic groups]{Shadowing of actions of hyperbolic groups on their boundaries}
\author{Michal Doucha}
\address{Institute of Mathematics\\
	Czech Academy of Sciences\\
	\v Zitn\'a 25\\
	115 67 Praha 1\\
	Czech Republic}
\email{doucha@math.cas.cz}
\urladdr{https://users.math.cas.cz/~doucha/}
\keywords{hyperbolic groups, shadowing, pseudo-orbit tracing property, Gromov boundary, topological stability}
\subjclass[2020]{20F67, 37B65, 37B25, 37B05}
\thanks{The author was supported by the GA\v{C}R project 22-07833K and by the Czech Academy of Sciences (RVO 67985840).}
\begin{document}
\maketitle
\begin{abstract}
We prove that the canonical action of every hyperbolic group on its Gromov boundary has the shadowing (aka pseudo-orbit tracing) property. In particular, this recovers the results of Mann et al. (\cite{BoMa,MaMa23,MaMaWe}) that such actions are topologically stable.
\end{abstract}
\section{Introduction}

Shadowing, also known as the pseudo-orbit tracing property, is a fundamental dynamical notion having its origins in smooth dynamics in the study of hyperbolic systems. An $\varepsilon$-pseudo-orbit of a homeomorphism $f:X\to X$ on a compact metric space $X$, for some $\varepsilon>0$, is a sequence $(x_n)_{n\in\Int}\subseteq X$ such that for every $n\in\Int$, $f(x_n)$ is in an $\varepsilon$-neighborhood of $x_{n+1}$. A homeomorphism has the shadowing if each pseudo-orbit stays close to a true orbit; typical examples are uniformly hyperbolic dynamical systems. Maps with the shadowing find practical applications when computing numerically the orbits as these numerical approximations usually start rapidly diverging from the true orbit, however there is another point nearby whose true orbit is close to the approximate pseudo-orbit. Shadowing obviously has a theoretical importance as well and we refer to the monograph \cite{Palm} for a comprehensive treatment. The theory of shadowing has been eventually generalized to actions of more general groups (see \cite{OsTi})) and there is no increased conceptual difficulty in such generalizations. Indeed, the natural setting for systems with the shadowing is that of general countable acting groups. Osipov and Tikhomirov in \cite{OsTi} show that an action of a finitely generated nilpotent group has the shadowing if and only if at least one element of the group acts expansively with the shadowing. Meyerovitch shows in \cite{Mey} that a large class of algebraic actions of countable groups has the shadowing, in particular every expansive principal algebraic action of any countable group has the shadowing. In symbolic dynamics, Chung and Lee show that a subshift over a countable group has the shadowing if and only if it is of finite type (see \cite{ChKeo}) generalizing the result of Walters for the integer actions (see \cite{Walt}). In fact, subshifts of finite type play an important role in the general theory of shadowing. Indeed, generalizing the fundamental results of Good and Meddaugh for the integer actions from \cite{GoMe}, Lin, Chen and Zhou show that every action of a finitely generated group on a zero-dimensional compact metrizable space has the shadowing if and only if it is an inverse limit of an inverse system of subshifts of finite type with the Mittag-Leffler property (see \cite{LiChZh}). The author of this article shows in \cite{Do24} that a generic action of a finitely generated group $G$ on the Cantor space has the shadowing if and only if there is a comeager conjugacy class in the space of actions of $G$ on $2^\Nat$ which is if and only if the so-called projectively isolated subshifts (see \cite[Definition 2.22]{Do24}) over $G$ are dense in the spaces of subshifts over $G$.

There is also a connection between shadowing and topological stability. Walters shows in \cite{Walt} that every expansive homeomorphism with the shadowing is topologically stable and this is generalized to general group actions by Chung and Lee in \cite{ChKeo}.

More particular examples of group actions with the shadowing are still needed though and here we show that a certain fundamental class of group actions consists of systems with the shadowing. We recall that for each hyperbolic group $G$, its Gromov boundary $\partial G$ is a natural object associated to it, which is a compact Hausdorff space, and the group $G$ is inseparable from the canonical action $G\curvearrowright \partial G$. Indeed, many of the properties of hyperbolic groups themselves are naturally investigated using the dynamics of $G\curvearrowright\partial G$ (see e.g. \cite[Part 2]{Can}). Moreover, a fundamental result of Bowditch is that the special dynamics of $G\curvearrowright\partial G$ in fact characterizes hyperbolic groups, see \cite{Bow98}. The following is the main result of this paper.

\begin{theorem}\label{thm:main}
	Let $G$ be a hyperbolic group. Then the canonical action $G\curvearrowright \partial G$ of the group on its Gromov boundary has the shadowing (pseudo-orbit tracing property).
\end{theorem}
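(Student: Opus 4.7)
\bigskip

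\noindent\emph{Proof plan.} Fix a finite symmetric generating set $S$ of $G$ so that the Cayley graph $\mathrm{Cay}(G,S)$ is $\delta_0$-hyperbolic for some $\delta_0>0$, and endow $\partial G$ with a visual metric $d$ with parameter $a>1$ satisfying $d(\xi,\eta)\asymp a^{-(\xi\mid\eta)_e}$, where the Gromov product is taken in $\mathrm{Cay}(G,S)$. Given $\varepsilon>0$, I would choose $N$ large (depending on $\varepsilon$, $a$, $\delta_0$) and then $\delta>0$ so small that any $\delta$-pseudo-orbit $(x_g)_{g\in G}$ satisfies the Gromov-product inequality $(s\cdot x_g\mid x_{sg})_e\ge N$ for every $s\in S$ and every $g\in G$. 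The plan is then to produce a single point $x^{*}\in\partial G$ whose orbit simultaneously shadows all the pseudo-orbit values.

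The geometric content of the pseudo-orbit condition is that any geodesic from $e$ to $x_{sg}$ fellow-travels, for distance at least $N$, with a geodesic from $e$ to $s\cdot x_g$. Since $G$ acts by isometries on $\mathrm{Cay}(G,S)$, the latter geodesic is, up to bounded hyperbolicity error, the concatenation of the edge $[e,s]$ with the $s$-translate of any geodesic from $e$ to $x_g$. Consequently, pseudo-orbit values $x_h$ at neighbouring group elements force the corresponding geodesic rays from $e$ to $x_h$ to exhibit matching initial behaviour, and this propagates along any geodesic word in $G$, giving an ``approximate'' geodesic skeleton in $\mathrm{Cay}(G,S)$ that extends beyond every $g\in G$.

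To construct $x^{*}$, I would select a coherent geodesic ray $(p_n)_{n\ge 0}$ from $e$ as follows: for each $n$, pick $p_n\in G$ at distance $n$ from $e$ lying, up to a uniformly bounded error, on any geodesic from $e$ to $x_h$ for all $h$ sufficiently far in a fixed auxiliary direction. Existence of such a coherent ray follows from a compactness/diagonal argument combined with local finiteness of $\mathrm{Cay}(G,S)$ and the fellow-travelling data above, and $x^{*}$ is defined as the boundary limit of $(p_n)$. For the shadowing verification, one observes that for each $g\in G$ a geodesic from $e$ to $g\cdot x^{*}$ is, up to bounded error, the concatenation of $[e,g]$ with the $g$-translate of $(p_n)$, and this ray fellow-travels a geodesic from $e$ to $x_g$ for distance at least $|g|+N-C$, where $C$ depends only on $\delta_0$ and the absolute constants. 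Translated back by the visual metric this yields $d(g\cdot x^{*},x_g)\lesssim a^{-(|g|+N-C)}<\varepsilon$.

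The main obstacle I expect is the global coherence of the ray $(p_n)$: the pseudo-orbit propagates fellow-travelling only locally, with a bounded loss at each iteration step coming from the thin-triangles constant $\delta_0$, whereas shadowing demands \emph{uniform} fellow-travelling simultaneously for every $g\in G$. Handling this requires taking $N$ large enough to absorb the cumulative error, and exploiting the full $\delta_0$-hyperbolicity --- in particular the Morse stability of geodesics and stability of quasi-geodesics --- to prevent the pseudo-geodesic skeleton derived from the pseudo-orbit from drifting away from actual geodesics in $\mathrm{Cay}(G,S)$. Once this coherence is established, the identification of $x^{*}$ and the shadowing estimate are essentially routine consequences of the visual-metric formalism and the $G$-equivariance of the Gromov product.
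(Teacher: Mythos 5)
Your plan shares the broad geometric spirit of the paper's proof (represent pseudo--orbit values by geodesic rays issuing from the identity, use hyperbolicity to glue local data into a global shadowing point), but the central step is left as an acknowledged obstacle rather than resolved, and the fix you sketch would not work as stated. The construction of the ``coherent geodesic ray'' $(p_n)$ is underspecified: you ask for $p_n$ to lie near a geodesic from $e$ to $x_h$ for all $h$ ``sufficiently far in a fixed auxiliary direction,'' but the pseudo-orbit data $(x_g)_{g\in G}$ do not single out any such direction, and the rays $[e,x_h)$ for $|h|\to\infty$ need not converge, so a compactness/diagonal argument does not by itself extract a single limiting ray. More seriously, the proposed remedy for the coherence problem --- ``taking $N$ large enough to absorb the cumulative error'' --- does not address it: the pseudo-orbit condition only controls one generator step at a time, so if one tries to propagate fellow-travelling by iterating, the hyperbolicity losses of size $O(\delta_0)$ accumulate over arbitrarily long words and eventually exceed any fixed $N$. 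Simply enlarging $N$ cannot beat an unbounded sum of bounded per-step errors.

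The paper avoids this accumulation entirely by a different mechanism. For each $g$ it concatenates $J$-length segments of the geodesic rays coded by the nearby pseudo-orbit values into a single map $c(g)$, and shows that $c(g)$ is a $k$-local $(1,2\delta)$-quasi-geodesic with constants \emph{independent of $g$}; the local-to-global lemma for quasi-geodesics and the Morse lemma then give a genuine geodesic ray $r(g)$ within a fixed distance $K$ of $c(g)$, with no error blow-up. The real work (Proposition~4.4) is to show that all the points $Q(m(g))$ so produced agree, i.e.\ that $c(1_G)$ and $s^{-1}c(s)$ are asymptotic for each generator $s$. This is done by contradiction using the dichotomy (Proposition~3.7) that two quasi-geodesic rays either stay $\Delta_1$-close forever or else diverge past $\Delta_1+2J+1+10\delta$ within a controlled window $\Delta_2$; the pseudo-orbit condition, via Lemmas~4.2 and~4.3, pins $c(1_G)$ and $s^{-1}c(s)$ within $\Delta_1+1+2J+10\delta$ on that same window, so divergence is impossible. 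This exponential-divergence dichotomy is the missing idea in your outline: it is what converts local control into global coherence without summing errors, and without it the plan as written does not close.
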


Recently, in a series of papers, Bowden and Mann \cite{BoMa}, Mann and Manning \cite{MaMa23}, and in full generality Mann, Manning and Weisman \cite{MaMaWe} show that the canonical action of a hyperbolic group on its Gromov boundary is topologically stable.

Their result follows as a corollary of Theorem~\ref{thm:main}, \cite{ChKeo}, and the fact that canonical actions of hyperbolic groups on their boundaries are expansive \cite[Chapter 3]{CoPabook}.
\begin{corollary}[\cite{MaMaWe}]
Let $G$ be a hyperbolic group. Then the canonical action $G\curvearrowright \partial G$ of the group on its Gromov boundary is topologically stable.
\end{corollary}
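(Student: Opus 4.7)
The plan is to deduce the corollary from Theorem~\ref{thm:main} by combining it with two already-established facts: the expansiveness of the canonical boundary action, and a shadowing-plus-expansiveness criterion for topological stability. This mirrors the strategy indicated in the paragraph preceding the corollary in the excerpt.

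First I would invoke the generalization by Chung and Lee \cite{ChKeo} of Walters' classical theorem from \cite{Walt}: if a countable group acts by homeomorphisms on a compact metric space, the action is expansive, and it has the shadowing property, then the action is topologically stable, in the sense that every sufficiently close action of $G$ on the same space admits a continuous equivariant factor map to it, uniformly close to the identity. This is the tool designed precisely to upgrade ``shadowing $+$ expansiveness'' to ``topological stability'' in the group-action setting, so the strategy reduces to verifying that both hypotheses hold for $G\curvearrowright\partial G$.

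Second, I would recall the expansiveness of the boundary action. It is a classical fact (see \cite[Chapter 3]{CoPabook}) that for any hyperbolic group $G$ and any visual metric on $\partial G$, the action $G\curvearrowright\partial G$ is expansive: there is a constant $\delta>0$ such that any two distinct points of $\partial G$ are separated by more than $\delta$ after applying some group element. The shadowing hypothesis is then supplied verbatim by Theorem~\ref{thm:main}.

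Combining the three ingredients immediately yields the corollary. There is essentially no substantive technical obstacle at this stage, since all of the genuine work has been absorbed into Theorem~\ref{thm:main}; what remains is to assemble the pieces, with a brief sanity check that the notion of topological stability proved in \cite{ChKeo} matches the formulation used in \cite{BoMa,MaMa23,MaMaWe}, which it does.
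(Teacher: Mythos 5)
Your proposal is correct and coincides with the paper's own (one-sentence) argument: both deduce the corollary from Theorem~\ref{thm:main} together with the Chung--Lee theorem from \cite{ChKeo} that expansive actions with shadowing are topologically stable, and the expansiveness of $G\curvearrowright\partial G$ from \cite[Chapter 3]{CoPabook}.
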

We remark that very recently Mann, Manning and Weisman also proved a version of topological stability for relatively hyperbolic groups and their canonical actions on Bowditch boundaries in \cite{MaMaWe24}.\medskip

In Section~\ref{sec:prelim} we recall the notions we work with in this paper. Section~\ref{sec:boundaries} contains basic results on Gromov boundaries of hyperbolic groups that will be needed in the proof of the main theorem which is carried out in the final Section~\ref{sec:mainproof}.
\section{Preliminaries}\label{sec:prelim}
\subsection{Metric geometry}
A \emph{geodesic}, resp. \emph{geodesic ray} in a metric space $M$ is an isometric embedding $c:[a,b]\to M$, resp. $c:\left[0,\infty\right)\to M$, for $a<b\in\Rea$. Sometimes, where there is no risk of confusion, we also call by geodesic, resp. geodesic ray, the images of the respective maps. Otherwise, we write $\Img(c)$ for the image of the geodesic, resp. geodesic ray $c$.

Since the metric spaces we shall work with are mostly finitely generated groups with the word metrics which are integer valued, we shall call by geodesic, resp. geodesic ray also the isometric embeddings $c:[n,m]\to G$, resp. $c:\Nat\to G$, where $G$ is a finitely generated group with some word metric, $[n,m]$ is an integer-interval for some $n<m\in\Int$.

\begin{definition}[Rips-hyperbolic metric spaces]
Let $M$ be a geodesic metric space and let $\delta>0$. $M$ is called \emph{$\delta$-hyperbolic} (in the sense of Rips) if for any $x,y,z\in M$ and any geodesics $c_{x,y}$, $c_{y,z}$, and $c_{z,x}$ between the corresponding points, for any $c\in\{c_{x,y}, c_{y,z},c_{z,x}\}$ we have that $\Img(c)$ is contained in the $\delta$-neighborhood of $\Img(c')\cup\Img(c'')$, where $c'\neq c''\in \{c_{x,y}, c_{y,z},c_{z,x}\}\setminus c$.

It is (Rips)-hyperbolic if it is $\delta$-hyperbolic for some $\delta\in\left[0,\infty\right)$.
\end{definition}

Let $G$ be a finitely generated group and let $S\subseteq G$ be some finite symmetric generating set. The \emph{word metric on $G$ with respect to $S$} is the metric $d_S$ defined by \[d_S(g,h):=\min\{n\in\Nat\colon \exists s_1,\ldots,s_n\in S\;(g=hs_1\cdots s_n)\},\quad g\neq h\in G.\] It is well-known and easy to check that for a different choice $S'\subseteq G$ of a finite symmetric generating set the word metrics $d_S$ and $d_{S'}$ are bi-Lipschitz equivalent.

\begin{definition}
A finitely generated group $G$ is (Gromov)-hyperbolic if $G$ is hyperbolic with respect to some, equivalently any, word metric on $G$. Equivalently, $G$ is hyperbolic if it acts geometrically on a hyperbolic metric space by isometries.
\end{definition}

\begin{definition}[Gromov boundary of a hyperbolic space]
Let $(M,d_M)$ be a hyperbolic proper geodesic metric space. Let $\mathcal{R}$ be the set of all geodesic rays $c:\left[0,\infty\right)\to M$, or rather their images, equipped with the topology of uniform convergence on bounded subsets. Let $D_\mathcal{R}$ denote the Hausdorff metric induced by $d_M$ and let $\sim$ be the equivalence relation on $\mathcal{R}$, where for $c,c'\in\mathcal{R}$ we have $c\sim c'$ if $D_\mathcal{R}(c,c')<\infty$. Denote by $\partial M$ the space $\mathcal{R}/\sim$ with the quotient topology. It is called the \emph{Gromov boundary of $M$}.
\end{definition}

When two geodesic rays have finite Hausdorff distance we shall say they are \emph{asymptotic}.

\subsection{Dynamics of groups}
The following is the central dynamical notion in this paper. We provide two equivalent definitions. We shall use the former in this article, the latter is more common in the literature.
\begin{definition}
	Let $G$ be a countable group acting continuously on a compact metrizable space $X$. We say that the action has \emph{the shadowing} (or \emph{the pseudo-orbit tracing property}) if one, and so both, of these two equivalent conditions hold.
	\begin{enumerate}
		\item  For every finite open cover $(U_i)_{i=1}^n$ of $X$ there exist a finite open cover $(V_j)_{j=1}^m$ of $X$ and a finite set $S\subseteq G$ such that for every sequence $(x_g)_{g\in G}\subseteq X$ satisfying for all $g\in G$ and $s\in S$ that $sx_g$ and $x_{sg}$ lie in some $V_j$, $j\leq m$, there exists $x\in X$ such that for all $g\in G$, $gx$ and $x_g$ lie in some $U_i$, $i\leq n$.
		
		To shorten the notation, we shall call such a sequence $(x_g)_{g\in G}$ an $(S,(V_j))$-pseudo-orbit.\medskip
		
		\item Suppose $X$ is equipped with a compatible metric $d_X$. Then for every $\varepsilon>0$ there are $\gamma>0$ and a finite set $S\subseteq G$ such that for every sequence $(x_g)_{g\in G}\subseteq X$ satisfying for all $g\in G$ and $s\in S$ that $d_X(sx_g,x_{sg})<\gamma$, there exists $x\in M$ such that for all $g\in G$ we have $d_X(gx,x_g)<\varepsilon$.
	\end{enumerate}

Notice that the first definition is applicable even without the condition on $X$ being metrizable.
	 
\end{definition}

We remark that when $G$ is finitely generated, by some finite generating set $T\subseteq G$, then the finite set $S\subseteq G$ from the definition can be always taken to be the fixed generating set $T$. Although we shall work only with finitely generated groups it is still (mostly notationally) useful to work with the more flexible definition where $S$ is allowed to vary.
\section{Boundaries and quasi-geodesics}\label{sec:boundaries}
In this section we collect several results that will be needed in the proof of the main result. Most of them are variants of well-known facts or can be quickly derived from such.

From now on, a hyperbolic group $G$ generated by a finite symmetric set $S\subseteq G$ is fixed. We have that the space $(G,d_S)$ is $\delta$-hyperbolic for some $\delta\in\Nat\cup\{0\}$.

\subsection{Geometry of quasi-geodesics}

We recall that a quasi-geodesic in a metric space $M$ is a quasi-isometric embedding of an interval (bounded or unbounded) into $M$. Since we work with a very special metric space we shall use the following more restrictive definition.
\begin{definition}
Let $\lambda\in\left(0,1\right]$ and $\varepsilon\in\Nat_0$. A \emph{$(\lambda,\varepsilon)$-quasi-geodesic} is a map $c:N\to G$ from a (bounded or unbounded) discrete interval $N\subseteq \Int$ satisfying \[\forall s,t\in N\; \big(\lambda|s-t|-\varepsilon\leq d_S(c(s),c(t))\leq |s-t|\big).\]

A map $c:N\to G$, where again $N\subseteq \Int$ is a bounded or unbounded interval is a \emph{$k$-local $(\lambda,\varepsilon)$-quasi-geodesic}, for some $k\in\Nat$, if the restriction of $c$ to every subinterval of length at most $k$ is a $(\lambda,\varepsilon)$-quasi-geodesic, i.e. \[\forall s,t\in N\; \big(|s-t|\leq k\Rightarrow \lambda|s-t|-\varepsilon\leq d_S(c(s),c(t))\leq |s-t|\big).\]
\end{definition}

\begin{lemma}\label{lem:localquasigeo}
Let $\lambda\in\left(0,1\right]$ and $\varepsilon\in\Nat_0$. Then there is $k_0>8\delta$ such that for every $k\geq k_0$ there are $\lambda'\in\left(0,1\right]$ and $\varepsilon'\in\Nat_0$ such that every $k$-local $(\lambda,\varepsilon)$-quasi-geodesic $c:N\to G$ is a $(\lambda',\varepsilon')$-quasi-geodesic.
\end{lemma}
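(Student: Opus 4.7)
The upper bound required of a $(\lambda',\varepsilon')$-quasi-geodesic, namely $d_S(c(s),c(t))\leq|s-t|$, comes for free: the local definition gives $d_S(c(s),c(s+1))\leq 1$ for consecutive integers, and the triangle inequality propagates this globally. So the task is only to produce a global lower bound $d_S(c(s),c(t))\geq\lambda'|s-t|-\varepsilon'$ for $s,t\in N$ with $|s-t|>k$.

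My plan is to invoke the Morse stability lemma (stability of quasi-geodesics) for $\delta$-hyperbolic spaces as a black box: there is a constant $H=H(\delta,\lambda,\varepsilon)$ such that the image of any genuine $(\lambda,\varepsilon)$-quasi-geodesic in $(G,d_S)$ lies in the $H$-Hausdorff-neighborhood of any geodesic segment joining its endpoints. I would then choose $k_0$ as a sufficiently large multiple of $\delta+H+\varepsilon/\lambda$, in particular $k_0>8\delta$, and fix $k\geq k_0$. For $s<t$ in $N$ with $|s-t|>k$, I would subdivide $[s,t]\cap\Int$ into consecutive windows $[u_i,u_{i+1}]$ of integer lengths in $[k/2,k]$, with $u_0=s$ and $u_m=t$. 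On each window $c$ restricts to a true $(\lambda,\varepsilon)$-quasi-geodesic, and Morse produces a geodesic segment $\gamma_i$ from $c(u_i)$ to $c(u_{i+1})$ of length at least $\lambda k/2-\varepsilon\gg H+\delta$, tracking $c|_{[u_i,u_{i+1}]}$ within Hausdorff distance $H$.

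The heart of the argument will be to show that the broken-geodesic concatenation $\gamma=\gamma_0\cdots\gamma_{m-1}$ is globally quasi-geodesic with uniform constants. I would establish this by induction on $m$: because each $\gamma_i$ is much longer than $H+\delta$, $\delta$-thin triangles force any geodesic from $c(s)$ to $c(u_{i+1})$ to pass within $H+2\delta$ of the previous breakpoint $c(u_i)$, preventing the broken path from doubling back near a joint. This will yield a lower estimate of the form $d_S(c(s),c(t))\geq m\bigl(\lambda k/2-\varepsilon-2(H+2\delta)\bigr)$ while $|s-t|\leq mk$, from which I would read off explicit
\[
\lambda'=\frac{\lambda}{2}-\frac{\varepsilon+2(H+2\delta)}{k},\qquad \varepsilon'=O(k),
\]
positive by the choice of $k_0$.

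The principal obstacle will be the inductive no-backtracking step above: a discrete, integer-valued version of the classical local-to-global argument for quasi-geodesics in hyperbolic spaces (in the style of Bridson--Haefliger, III.H.1.13). Once the Morse constant $H$ is fixed, the rest amounts to careful bookkeeping with $\delta$-thin triangles and choosing $k_0$ large enough that the window lengths dominate all the error terms arising from the Morse lemma and the overlap of consecutive $\gamma_i$.
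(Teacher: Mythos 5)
Your proposal and the paper's proof rest on the same underlying mathematics, but the paper doesn't prove anything here at all: it simply cites Bridson--Haefliger, Theorem III.H.1.7 (the Morse lemma) and Theorem III.H.1.13 (the local-to-global principle for local geodesics), together with the remark on p.~407 that extends the latter to local quasi-geodesics. What you have written is a sketch of the proof of that cited result, so in that sense you are taking exactly the same route, just unpacking the black box the paper leaves closed.

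A few points about the sketch itself. The reduction to the lower bound and the appeal to the Morse constant $H$ are right. The heart of the argument is, as you say, the no-backtracking/Gromov-product control at each breakpoint, and this is precisely what BriHae III.H.1.13 and the remark on p.~407 supply; you are not adding a shortcut here, only restating the standard argument. One calibration issue you should fix before it could compile into a proof: to bound the ``angle'' at an interior breakpoint $c(u_i)$ you need $c$ to be a genuine $(\lambda,\varepsilon)$-quasi-geodesic on the \emph{union} of the two adjacent windows $[u_{i-1},u_{i+1}]$, so that interval must have length at most $k$. With windows chosen in $[k/2,k]$ the union can have length up to $2k$, which the $k$-local hypothesis does not control; take windows of length at most $k/2$ (equivalently run the argument with $k$ replaced by $2k$). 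Also, $\delta$-thinness of the triangle $\triangle\bigl(c(u_0),c(u_i),c(u_{i+1})\bigr)$ alone does not force the geodesic $[c(u_0),c(u_{i+1})]$ to pass near the vertex $c(u_i)$; what you actually need is the combination of (i) the corner Gromov product $(c(u_{i-1})\cdot c(u_{i+1}))_{c(u_i)}$ being small (from the Morse lemma on $[u_{i-1},u_{i+1}]$) and (ii) the inductive bound on $(c(u_0)\cdot c(u_i))_{c(u_{i-1})}$, fed through the four-point/thin-triangle inequality. That is exactly the ``careful bookkeeping'' you flag, and it is the content of the BriHae result; it works, but the phrase ``$\delta$-thin triangles force\dots'' understates what has to be checked.

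Given that the paper itself offers no argument beyond the citation, your proposal is a correct roadmap of the standard proof rather than an alternative approach.
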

\begin{proof}
It follows from \cite[Theorem III.H 1.7 and Theorem III.H 1.13]{BriHaebook}, see also the remark on page 407 in \cite{BriHaebook}.
\end{proof}

The following is the well-known Morse lemma whose proof can be found e.g. in \cite[Theorem III.H 1.7]{BriHaebook} or \cite[Theorem 11.40]{DruKabook}.
\begin{lemma}\label{lem:Morse}
For every $\lambda\in\left(0,1\right]$ and $\varepsilon\in\Nat_0$ there exists $K\geq 0$ such that for every $(\lambda,\varepsilon)$-quasi-geodesic $c:N\to G$, where $N\subseteq\Nat$ is a bounded or unbounded interval containing $1$ (i.e. either equal to $\Nat$ or equal to $[1,n]$ for some $n\in\Nat$) such that
\begin{enumerate}
	\item if $N$ is finite, then every geodesic $c'$ with the same endpoints as $c$ is of Hausdorff distance at most $K$ from $c$;
	\item if $N$ is infinite, then there exists a geodesic $c':\Nat\to G$ such that $c(1)=c'(1)$ and $c$ and $c'$ are of Hausdorff distance at most $K$.
\end{enumerate} 
\end{lemma}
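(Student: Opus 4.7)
The plan is to establish part (1) via a geometric argument using $\delta$-hyperbolicity, and then derive part (2) from (1) by a properness and diagonal-extraction argument.

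For (1), take a $(\lambda,\varepsilon)$-quasi-geodesic $c:[0,n]\to G$ and a geodesic $c':[0,m]\to G$ sharing endpoints with $c$. I would pass to the Cayley graph of $(G,S)$ so that $c$ extends naturally to a continuous quasi-geodesic and $c'$ to a geodesic segment; this only costs constants $\leq 1$ and keeps the quasi-geodesic parameters essentially unchanged. The standard step is now to bound $D:=\sup_{t\in[0,m]} d_S(c'(t),\Img(c))$ by a constant depending only on $\lambda,\varepsilon,\delta$. Letting $c'(t_0)$ realise the supremum, apply $\delta$-slimness iteratively to the triangle formed by the two halves of $c'$ and the corresponding segments of $c$; a ``subdivide and compare'' argument, combined with the quasi-isometric lower bound $\lambda|s-t|-\varepsilon\leq d_S(c(s),c(t))$, yields a recursive inequality on $D$ that ultimately bounds it by some $K_0=K_0(\lambda,\varepsilon,\delta)$. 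The reverse inclusion — every point of $\Img(c)$ lies in the $K$-neighborhood of $\Img(c')$ for a suitable $K\geq K_0$ — then follows by a short chaining argument: if an interval $[s,t]\subseteq [0,n]$ of $c$ remains far from $\Img(c')$, then the endpoints $c(s)$ and $c(t)$ still lie in the $K_0$-neighborhood of $\Img(c')$, so the shortcut through $c'$ combined with the upper Lipschitz bound for $c$ contradicts the lower bound $\lambda|s-t|-\varepsilon\leq d_S(c(s),c(t))$ once $|s-t|$ is large enough.

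For (2), let $c:\Nat\to G$ be an infinite $(\lambda,\varepsilon)$-quasi-geodesic. For each $n\in\Nat$ choose a geodesic $c'_n$ from $c(1)$ to $c(n)$; by (1) the Hausdorff distance between $\Img(c'_n)$ and $\Img(c|[1,n])$ is at most $K$. Since the Cayley graph of $G$ is proper (closed balls with respect to $d_S$ are finite) and each $c'_n$ starts at the fixed vertex $c(1)$, the initial segments of the $c'_n$ take only finitely many values at each fixed combinatorial length. A standard diagonal extraction therefore produces a geodesic ray $c':\Nat\to G$ with $c'(1)=c(1)$ that is a pointwise limit of a subsequence of $(c'_n)$. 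Passing the uniform Hausdorff bound from (1) to the limit — checking both containments separately and using that every vertex of $c$ appears in all but finitely many truncations — shows that $\Img(c)$ and $\Img(c')$ are at Hausdorff distance at most $K$.

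The main obstacle is the recursive bound on $D$ in step (1). The geometric input (slimness of triangles) is entirely standard, but the details of the recursion — in particular tracking how the combinatorial length along $c$ between consecutive near-projections onto $c'$ depends on $D$ — require careful bookkeeping in $\lambda,\varepsilon,\delta$ to close the inequality and extract a uniform $K$.
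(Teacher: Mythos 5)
The paper does not supply its own proof --- it simply cites Bridson--Haefliger (Theorem III.H.1.7) and Dru\c{t}u--Kapovich (Theorem 11.40) --- and your sketch reproduces exactly the standard argument those sources use: taming to a continuous quasi-geodesic, bounding the Hausdorff distance of a finite segment via $\delta$-slimness together with the lower quasi-geodesic bound, and then deriving the ray case (2) from the finite-segment case (1) by properness and diagonal extraction, which correctly delivers the extra requirement that $c'(1)=c(1)$. Your part (1) is admittedly schematic about closing the recursion, but you identify the right ingredients, and the paper itself gives no more detail than the citation.
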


\begin{proposition}\label{prop:closenessofgeo}
	Let $c,c':[1,R]\to G$, for $R\geq 1$ be two geodesics. Set $D:=d_S(c(1),c'(1))$ and $T:=d_S(c(R),c'(R))$. Then for every $t\leq R-T-2\delta$ we have $d_S(c(t),c'(t))\leq D+4\delta$.
	
	Consequently, for every two asymptotic geodesic rays $c,c':\Nat\to G$ and for every $t\in\Nat$ we have $d_S(c(t),c'(t))\leq d_S(c(1),c'(1))+4\delta$.
\end{proposition}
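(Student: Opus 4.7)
The plan is to build a geodesic quadrilateral with vertices $c(1), c(R), c'(R), c'(1)$, whose sides include $c$, $c'$, a geodesic $\alpha$ of length $D$ between $c(1)$ and $c'(1)$, and a geodesic $\beta$ of length $T$ between $c(R)$ and $c'(R)$. I would then introduce the diagonal geodesic $\gamma$ from $c(R)$ to $c'(1)$, splitting the quadrilateral into two geodesic triangles $\Delta_1 = \{c(1), c(R), c'(1)\}$ (with sides $c, \gamma, \alpha$) and $\Delta_2 = \{c(R), c'(R), c'(1)\}$ (with sides $\beta, c', \gamma$). The proof amounts to applying the Rips $\delta$-thinness condition twice in succession.

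Fix $t \leq R - T - 2\delta$. Applying $\delta$-thinness in $\Delta_1$ places $c(t)$ within $\delta$ of either $\alpha$ or $\gamma$. If $c(t)$ lies $\delta$-close to $\alpha(r)$ (parametrizing $\alpha$ from $c(1)$), the fact that $c$ is a geodesic pins down $r$ to within $\delta$ of $t-1$, and a direct triangle inequality yields $d_S(c(t), c'(t)) \leq \delta + (D - r) + (t-1) \leq D + 2\delta$. Otherwise, $c(t)$ is $\delta$-close to some $\gamma(s)$ with $|s - (R-t)| \leq \delta$, and I would apply $\delta$-thinness in $\Delta_2$ at $\gamma(s)$. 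Here two sub-cases arise. If $\gamma(s)$ is close to a point $\beta(r)$, the constraints $r \in [0,T]$ and $|s - r| \leq \delta$ combined with $s \approx R - t$ force $t \geq R - T - 2\delta$, which together with the hypothesis pins down $t = R - T - 2\delta$ and $\beta(r) = c'(R)$; one more triangle-inequality check (using $d_S(c(1), c'(R)) \geq (R-1) - D$) then forces $T \leq D$, giving $d_S(c(t), c'(t)) \leq T + 4\delta \leq D + 4\delta$. If instead $\gamma(s)$ is close to some $c'(u)$, comparing distances to $c'(1)$ yields $u = L' - s + 1$ up to an error of $\delta$, where $L' := d_S(c(R), c'(1))$, and combining with $s \approx R - t$ together with the triangle-inequality bound $|L' - (R-1)| \leq D$ gives $|u - t| \leq D + 2\delta$ and hence $d_S(c(t), c'(t)) \leq 2\delta + |u-t| \leq D + 4\delta$.

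The ``consequently'' clause is then a quick corollary: for asymptotic rays $c, c'$ the Hausdorff-distance finiteness bounds $d_S(c(R), c'(R))$ uniformly in $R$, so for any fixed $t \in \Nat$ one chooses $R$ large enough that $t \leq R - d_S(c(R), c'(R)) - 2\delta$ and applies the first part. I expect the main obstacle to be the careful parameter bookkeeping in the degenerate sub-case above: a naive estimate there only yields $T + 4\delta$, and one must notice that the very constraints of that sub-case geometrically force $T \leq D$, converting the estimate into the required $D + 4\delta$.
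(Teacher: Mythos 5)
Your proof is correct, but you decompose the quadrilateral along the opposite diagonal from the paper. The paper draws the diagonal $v''$ from $c(1)$ to $c'(R)$ and first applies $\delta$-thinness in the triangle $\{c(1), c(R), c'(R)\}$; there the $T$-side lies in the first triangle, so the hypothesis $t \leq R - T - 2\delta$ immediately kills the ``$c(t)$ near $v'$'' alternative at the first step, and the second thinness application in $\{c(1), c'(1), c'(R)\}$ then gives $D + 4\delta$ in both remaining cases by short triangle-inequality bookkeeping. You instead draw the diagonal $\gamma$ from $c(R)$ to $c'(1)$; the $D$-side $\alpha$ lands in the first triangle, which resolves that case at once with $D + 2\delta$, but the $T$-side $\beta$ is pushed into the second triangle $\{c(R), c'(R), c'(1)\}$, and the hypothesis is used only indirectly, to show the ``$\gamma(s)$ near $\beta$'' sub-case is degenerate: the constraints pin $t = R - T - 2\delta$ and $\beta(r) = c'(R)$, and the extra estimate $d_S(c(1), c'(R)) \geq (R-1) - D$ combined with $d_S(c(1), c'(R)) \leq (t-1) + 2\delta$ forces $T \leq D$, upgrading the naive $T + 4\delta$ bound to $D + 4\delta$. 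Both decompositions are valid and the calculations you sketch check out (including the uniform bound on $d_S(c(R), c'(R))$ needed for the ``consequently'' clause); the paper's choice of diagonal is slightly more economical since it dispatches the $T$-side immediately and never has to confront the degenerate sub-case.
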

\begin{proof}
	Fix three geodesics $v$, $v'$, $v''$ resp. between $c(1)$ and $c'(1)$, between $c(R)$ and $c'(R)$, and between $c(1)$ and $c'(R)$ respectively. Pick $t\leq R-T-2\delta$. Then $c(t)$ is of distance at most $\delta$ from either $v'$ or $v''$. Since $t\leq R-T-2\delta$, a simple triangle inequality shows it must be of distance at most $\delta$ from some element $x\in \Img(v'')$. Then $x$ is of distance at most $\delta$ from either $v$ or $c'$. 
	\begin{enumerate}
		\item Suppose there is $t'\leq D$ such that $d_S(x,v(t'))\leq \delta$. By triangle inequality, we have $t\leq 2\delta+t'\leq D+2\delta$. Then we have \[\begin{split}d_S(c(t),c'(t)) &\leq 2\delta+d_S(v(t'),c'(1))+t\\ &\leq 2\delta+(D-t+2\delta)+t=D+4\delta.\end{split}\]\medskip
		
		\item Suppose now that there is $t'$ such that $d_S(x,c'(t'))\leq \delta$. Either $t'\leq t$ or $t'\geq t$. In the first case, by triangle inequality, $t\leq D+t'+2\delta$, while in the second case, by triangle inequality, $t'\leq D+t+2\delta$. Therefore $|t-t'|\leq D+2\delta$ and so by triangle inequality \[d_S(c(t),c'(t))\leq 2\delta+|t-t'|\leq D+4\delta.\]
	\end{enumerate}
	
	The `Consequently' part immediately follows.
\end{proof}

\begin{definition}
	Let $c_1,c_2:N\to G$, where $N\subseteq\Nat$ is a bounded, resp. unbounded interval containing $1$ (i.e. either equal to $\Nat$ or equal to $[1,n]$ for some $n\in\Nat$), be two geodesics, resp. geodesic rays such that $c_1(1)=c_2(1)$. Call such maps \emph{geodesics stemming from the same point}, resp. \emph{geodesic rays stemming from the same point}. We say that $c_1$ and $c_2$ \emph{$(l,D)$-fellow travel}, for some $l\in\Nat$, $D\geq 2\delta$, if for every $i\leq l$ we have $d_S(c_1(i),c_2(i))\leq D$.
\end{definition}

\begin{lemma}\label{lem:closenessofgeo}
	Let $c,c':\Nat\to G$ be geodesic rays stemming from the same point. Suppose that for some  $K\in\Nat$ and $t>K+\delta$ we have $d_S(c(t),\Img(c'))\leq K$. Then for every $t'\leq t-K-\delta$ we have $d_S(c(t'),c'(t'))\leq 2\delta$.
\end{lemma}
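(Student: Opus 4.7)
The plan is to build a geodesic triangle using $c(t)$ and a nearby point on $c'$, apply $\delta$-slimness to place the initial portion of $c$ within $\delta$ of $\Img(c')$, and then convert this closeness into a pointwise bound via a triangle-inequality argument at the common basepoint.

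In more detail, I would begin by choosing $s'\in\Nat$ with $d_S(c(t),c'(s'))\le K$. Since $c$ and $c'$ are geodesic rays stemming from the common point $x:=c(1)=c'(1)$, the triangle inequality applied to $x,c(t),c'(s')$ yields $|t-s'|\le K$, so in particular $s'\ge t-K\ge t'+\delta$ for any $t'\le t-K-\delta$; in particular $c'(t')$ lives on $c'|_{[1,s']}$. I would then consider the geodesic triangle with vertices $x$, $c(t)$, $c'(s')$ whose sides are $c|_{[1,t]}$, $c'|_{[1,s']}$, and a geodesic $v$ from $c(t)$ to $c'(s')$ of length at most $K$.

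The next step is to apply $\delta$-slimness to this triangle: the point $c(t')$ lies in the $\delta$-neighborhood of $\Img(c'|_{[1,s']})\cup\Img(v)$. The crucial observation is that, under the hypothesis $t'\le t-K-\delta$, if $c(t')$ is $\delta$-close to some $y\in\Img(v)$ then $y$ must be the endpoint $c'(s')$. Indeed,
\[
K+\delta\le t-t'=d_S(c(t'),c(t))\le\delta+d_S(y,c(t))
\]
forces $d_S(y,c(t))\ge K$; but $y$ lies on a geodesic of length at most $K$ starting at $c(t)$, hence $d_S(y,c(t))=K$ and $y=c'(s')$. Thus in either case there exists $u\in[1,s']$ with $d_S(c(t'),c'(u))\le\delta$. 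Comparing distances from $x$ along the two rays gives
\[
|t'-u|=|d_S(x,c(t'))-d_S(x,c'(u))|\le d_S(c(t'),c'(u))\le\delta,
\]
and then the triangle inequality furnishes
\[
d_S(c(t'),c'(t'))\le d_S(c(t'),c'(u))+d_S(c'(u),c'(t'))\le\delta+|u-t'|\le 2\delta.
\]

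The only subtlety I anticipate is the boundary case $t'=t-K-\delta$: slimness does not a priori rule out $c(t')$ being $\delta$-close to an interior point of $v$ rather than to $c'|_{[1,s']}$. The arithmetic above is arranged precisely so that any such $y$ is forced to coincide with the shared endpoint $c'(s')$, which also lies on $\Img(c'|_{[1,s']})$, so the argument goes through uniformly in $t'$.
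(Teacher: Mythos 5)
Your proof is correct and takes essentially the same route as the paper: form the geodesic triangle on $c(1)=c'(1)$, $c(t)$, and a point $c'(s')$ within $K$ of $c(t)$, use $\delta$-slimness to push $c(t')$ within $\delta$ of $\Img(c')$ (ruling out the bridging geodesic by a triangle-inequality estimate), and then compare parameters from the common basepoint. You simply spell out the two triangle-inequality steps that the paper leaves terse.
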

\begin{proof}
	Let $s\in \Nat$ be such that $d_S(c(t),c'(s))\leq K$ and let $c''$ be a geodesic between $c(t)$ and $c'(s)$. Consider the geodesic triangle $c\upharpoonright [1,t]$, $c''$, $c'\upharpoonright[1,s]$. For every $t'\leq t-K-\delta$, by $\delta$-hyperbolicity, there is a point lying either on $\Img(c'')$ or $\Img(c'\upharpoonright [1,s])$ of distance at most $\delta$ from $c(t')$. By triangle inequality, the point must lie on $\Img(c'\upharpoonright [1,s])$, thus we have $d_S(c(t'),\Img(c'))\leq \delta$, and by another triangle inequality, $d_S(c(t'),c'(t'))\leq 2\delta$.
\end{proof}

\begin{lemma}\label{lem:fellowtravel}
For every $l'$ and $D\geq 2\delta$ there exists $l$ such that every two geodesic rays $c_1,c_2:N\to G$ stemming from the same point that $(l,D)$-fellow travel actually $(l',2\delta)$-fellow travel.
\end{lemma}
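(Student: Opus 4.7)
The plan is to deduce the lemma directly from Lemma~\ref{lem:closenessofgeo}. Given $l'$ and $D\geq 2\delta$, I would set $l := l' + D + \delta + 1$. Suppose $c_1, c_2 : N \to G$ are geodesic rays stemming from the same point which $(l, D)$-fellow travel. By the definition of $(l, D)$-fellow travel, the interval $[1, l]$ is contained in $N$ and $d_S(c_1(l), c_2(l)) \leq D$; in particular $d_S(c_1(l), \Img(c_2)) \leq D$.

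Next, I would apply Lemma~\ref{lem:closenessofgeo} with $c := c_1$, $c' := c_2$, $K := D$, and $t := l$. The hypothesis $t > K + \delta$ is satisfied since $l = l' + D + \delta + 1 > D + \delta$. The conclusion yields $d_S(c_1(t'), c_2(t')) \leq 2\delta$ for every $t' \leq l - D - \delta = l' + 1$. In particular this holds for all $t' \leq l'$, so $c_1$ and $c_2$ indeed $(l', 2\delta)$-fellow travel.

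The only minor technical point is that Lemma~\ref{lem:closenessofgeo} is stated for infinite geodesic rays $\Nat \to G$, whereas in the present lemma $N$ is allowed to be finite. However, the proof of Lemma~\ref{lem:closenessofgeo} only uses the finite geodesic triangle with vertices $c_1(1) = c_2(1)$, $c_1(l)$, $c_2(l)$ together with a chosen geodesic between the latter two points, so it extends verbatim to any geodesics defined on an initial segment of length at least $l$. I expect no real obstacle; the statement is essentially an immediate corollary of Lemma~\ref{lem:closenessofgeo}, with the specific numerical choice $l := l' + D + \delta + 1$ arising from reading the hypothesis ``$t > K + \delta$'' and the conclusion ``$t' \leq t - K - \delta$'' backwards from the desired bound $t' \leq l'$.
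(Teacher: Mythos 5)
Your proof is correct and takes exactly the approach the paper has in mind: the paper's own proof is simply the one-line remark that the lemma follows immediately from Lemma~\ref{lem:closenessofgeo}, and you have spelled out the explicit choice $l := l' + D + \delta + 1$ and verified the hypotheses of that lemma. The side remark about finite $N$ is not actually needed here, since the statement concerns geodesic rays and hence $N=\Nat$, but it does no harm.
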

\begin{proof}
This follows immediately from Lemma~\ref{lem:closenessofgeo}.
\end{proof}

It is a well-know fact that two geodesic rays stemming from the same point either fellow-travel or eventually diverge exponentially. We shall need a consequence of this phenomenon.

\begin{fact}\label{fact:divergence}
Let $D\in\Nat\cup\{0\}$ and $R>0$ be two constants. Then for every pair $c,c':\Nat\to G$ of two geodesic rays satisfying $d_S(c(1),c'(1))\leq D$
there are constants $\Delta\in\Nat$ and $s\in\Nat$, the former depending only on the hyperbolicity constant $\delta$ and on $D$ and the latter additionally on $R$ such that either $d_S(c(t),\Img(c'))\leq \Delta$ for all $t\in\Nat$ or there is minimal $t_0\in\Nat$ such that $d_S(c(t_0),\Img(c'))>\Delta$ and we have $d_S(c(t_0+s),\Img(c'))>R$.
% grows linearly, where the linearity constant again depends only $\delta$ and $d_S(c(1),c'(1))$.
\end{fact}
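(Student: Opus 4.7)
The plan is to reduce to the situation where the two rays stem from a common point, and then to extract the required divergence from the contrapositive of Lemma~\ref{lem:closenessofgeo}. Concretely, I would prepend to $c'$ a geodesic $\gamma:[1,m]\to G$ from $c(1)$ to $c'(1)$ (with $m\leq D+1$), obtaining a concatenation $c_1:\Nat\to G$ that begins at $c(1)$ and then follows $c'$. A short triangle-inequality check shows $c_1$ is a $(1,2D)$-quasi-geodesic: the only nontrivial case is when the two arguments lie on opposite sides of the concatenation point $m$, and the detour there costs at most $2(m-1)\leq 2D$, matching the additive slack. Applying Lemma~\ref{lem:Morse} to $c_1$ gives a genuine geodesic ray $c'':\Nat\to G$, also starting at $c(1)$, whose image is within some Hausdorff distance $K_0=K_0(\delta,D)$ of $\Img(c_1)$. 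Since $\Img(c')\subseteq\Img(c_1)$ and $\Img(c_1)$ is contained in the $D$-neighborhood of $\Img(c')$, the Hausdorff distance between $\Img(c')$ and $\Img(c'')$ is bounded by the constant $C:=K_0+D$ depending only on $\delta$ and $D$.

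Next, I would set $\Delta:=C+2\delta+1$ and consider the dichotomy. Either $d_S(c(t),\Img(c'))\leq\Delta$ for every $t$, in which case the first alternative of the fact holds immediately, or there is a least $t_0\in\Nat$ with $d_S(c(t_0),\Img(c'))>\Delta$. In the latter case, the Hausdorff comparison between $\Img(c')$ and $\Img(c'')$ gives $d_S(c(t_0),c''(t_0))\geq d_S(c(t_0),\Img(c''))>\Delta-C=2\delta+1$, so in particular $d_S(c(t_0),c''(t_0))>2\delta$. Since $c$ and $c''$ now share the initial point $c(1)$, the contrapositive of Lemma~\ref{lem:closenessofgeo} applies: it forces $d_S(c(t),\Img(c''))>K$ for every $K\in\Nat$ and every $t$ satisfying $t>K+\delta$ and $t_0\leq t-K-\delta$. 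Picking $K:=\lceil R\rceil+C+1$ and setting $s:=K+\delta$, both depending only on $\delta$, $D$, and $R$, the contrapositive applied at $t=t_0+s$ yields $d_S(c(t_0+s),\Img(c''))>K$, and hence
\[
d_S(c(t_0+s),\Img(c'))\geq d_S(c(t_0+s),\Img(c''))-C>K-C\geq R+1>R,
\]
which is the second alternative.

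The main conceptual step is the reduction to rays with a common base point via Morse's lemma, which is what unlocks Lemma~\ref{lem:closenessofgeo}; after this reduction, the argument is essentially bookkeeping among the three point-to-set distances to $\Img(c')$, $\Img(c_1)$, and $\Img(c'')$. The only mildly delicate technical point is verifying that $c_1$ is genuinely a quasi-geodesic, since it typically fails to be a local geodesic at the concatenation point $m$; however, the additive constant $2D$ in the quasi-geodesic condition absorbs exactly this failure.
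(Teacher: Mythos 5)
Your proof is correct, but it takes a genuinely different route from the paper's. The paper sets $\Delta := D + 4\delta$ directly from Proposition~\ref{prop:closenessofgeo} and then obtains the uniform constant $s$ non-constructively, by a compactness/contradiction argument: if no uniform $s$ existed, one would have a sequence of pairs $(c_n,c'_n)$ with $d_S(c_n(1),c'_n(1))\leq D$, $d_S(c_n(t_n),\Img(c'_n))>\Delta$, yet $d_S(c_n(t_n+n),\Img(c'_n))\leq R$, contradicting Proposition~\ref{prop:closenessofgeo} for $n$ large. You instead reduce to the common-basepoint case by prepending a short connecting geodesic, correctly checking that the resulting concatenation $c_1$ is a $(1,2D)$-quasi-geodesic (the $2D$ additive slack exactly absorbs the possible backtrack at the splice point), and invoking the Morse lemma (Lemma~\ref{lem:Morse}) to replace $c_1$ by a geodesic ray $c''$ stemming from $c(1)$ whose image lies within Hausdorff distance $C=C(\delta,D)$ of $\Img(c')$; the contrapositive of Lemma~\ref{lem:closenessofgeo} then delivers the divergence. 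Both arguments rest ultimately on the same thin-triangles geometry, but the paper's proof is shorter and avoids the Morse lemma, while yours is fully constructive: it produces explicit formulas $\Delta = C + 2\delta + 1$ and $s = \lceil R\rceil + C + 1 + \delta$ and makes the dependence of the constants on $\delta$, $D$, $R$ completely transparent, at the cost of routing through the (non-effective, but existence-only) Morse constant.
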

\begin{proof}
Fix $\delta$, $D$ and $R>0$. It immediately follows from Proposition~\ref{prop:closenessofgeo} that one can take e.g. $\Delta=D+4\delta$. Suppose there is no such uniform constant $s\in\Nat$. Then there is a sequence $(c_n,c'_n)_{n\in\Nat}$ of pairs of geodesic rays such that for every $n\in\Nat$ we have $d_S(c_n(1),c'_n(1))\leq D$ and there is $t_n\in\Nat$ such that $d_S(c_n(t_n),\Img(c'_n))>\Delta$, yet $d_S(c_n(t_n+n),\Img(c'_n))\leq R$. For large enough $n$ this contradicts Proposition~\ref{prop:closenessofgeo}.
\end{proof}

We shall need a version of the previous fact for quasi-geodesics.
\begin{proposition}\label{prop:quasi-geo-diverging}
There are constants \[\Delta_1=\Delta_1(\delta,\lambda,\varepsilon,D)\quad\text{and}\quad\Delta_2=\Delta_2(\delta,\lambda,\varepsilon,D,C),\] where $\Delta_1$ depends only on $\delta$, $\lambda$, $\varepsilon$ and $D\in\Nat$, and $\Delta_2$ additionally on $C$, such that for every pair of $(\lambda,\varepsilon)$-quasi-geodesic rays $c,c':\Nat\to G$, for some $\lambda\in\left(0,1\right]$ and $\varepsilon\in\Nat_0$, with $d_S(c(1),c'(1))\leq D$ we have the following. Either \[d_S(c(t),\Img(c'))\leq \Delta_1\quad\text{for all}\quad t\in\Nat\] or there is $t_0\in\Nat$ such that $d_S(c(t_0),\Img(c'))>\Delta_1$ and then \[d_S(c(t_0+\Delta_2),\Img(c'))>\Delta_1+C+10\delta.\]
\end{proposition}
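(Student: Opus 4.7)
The plan is to reduce the claim to the geodesic version (Fact~\ref{fact:divergence}) using the Morse lemma, with the additive tolerance equal to twice the Morse constant.

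First, apply Lemma~\ref{lem:Morse} to obtain geodesic rays $\tilde{c},\tilde{c}':\Nat\to G$ with $\tilde{c}(1)=c(1)$, $\tilde{c}'(1)=c'(1)$, each at Hausdorff distance at most $K=K(\lambda,\varepsilon)$ from $c,c'$. Set $\Delta_1:=D+4\delta+2K$. If $d_S(\tilde{c}(\tau),\Img(\tilde{c}'))\le D+4\delta$ for every $\tau$, then since every $c(t)$ lies within $K$ of $\Img(\tilde{c})$ and $\Img(\tilde{c}')$ lies within $K$ of $\Img(c')$, the first alternative $d_S(c(t),\Img(c'))\le\Delta_1$ holds for every $t$. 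Otherwise, given $t_0$ with $d_S(c(t_0),\Img(c'))>\Delta_1$, pick $\tau_0$ with $d_S(c(t_0),\tilde{c}(\tau_0))\le K$; then $d_S(\tilde{c}(\tau_0),\Img(\tilde{c}'))>D+4\delta$.

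Apply Fact~\ref{fact:divergence} to $\tilde{c},\tilde{c}'$ with $R:=\Delta_1+C+10\delta+2K$, obtaining $s=s(\delta,D,R)$ with $d_S(\tilde{c}(\tau_0+s),\Img(\tilde{c}'))>R$; note that the contradiction argument in the proof of Fact~\ref{fact:divergence}, relying on Proposition~\ref{prop:closenessofgeo}, actually yields this at any $\tau_0$ exceeding the threshold $D+4\delta$, not only at the minimal such. Choose $\Delta_2$ large enough that some $\tau_1$ with $d_S(c(t_0+\Delta_2),\tilde{c}(\tau_1))\le K$ satisfies $\tau_1\ge\tau_0+s$; then
\[d_S(c(t_0+\Delta_2),\Img(c'))\ge d_S(\tilde{c}(\tau_1),\Img(\tilde{c}'))-2K>R-2K=\Delta_1+C+10\delta.\]
The quasi-geodesic inequality $d_S(c(t_0),c(t_0+\Delta_2))\ge\lambda\Delta_2-\varepsilon$ forces $|\tau_1-\tau_0|\ge\lambda\Delta_2-\varepsilon-2K$, so $\Delta_2$ of order $(s+\varepsilon+2K)/\lambda$ (plus a safety margin) makes this absolute value exceed $s$.

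The main obstacle is ensuring the correct sign, $\tau_1\ge\tau_0+s$ rather than $\tau_1\le\tau_0-s$, because the nearest-point parameter on $\tilde{c}$ need not be monotone in $t$. This is resolved by the coarse monotonicity of the nearest-point projection of a quasi-geodesic onto its Morse companion in a hyperbolic space. Alternatively, the whole argument can be carried out by contradiction in the spirit of the proof of Fact~\ref{fact:divergence}: a failing sequence $(c_N,c'_N,t_N)$ produces, via Morse, parameters $\tau_N,\sigma_N$ with $|\sigma_N-\tau_N|\ge\lambda N-\varepsilon-2K\to\infty$, and the case $\sigma_N<\tau_N$ is handled by comparing distances to $\tilde{c}_N(1)=c_N(1)$ together with the progress estimate $d_S(c_N(1),c_N(t_N+N))\ge\lambda(t_N+N)-\varepsilon$; the remaining case $\sigma_N\ge\tau_N+s$ contradicts the strengthened Fact~\ref{fact:divergence}.
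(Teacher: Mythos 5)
Your proposal follows essentially the same route as the paper's proof: apply the Morse Lemma (Lemma~\ref{lem:Morse}) to replace $c,c'$ by geodesic companions, set $\Delta_1$ as the geodesic constant $D+4\delta$ from Fact~\ref{fact:divergence} plus twice the Morse constant, invoke Fact~\ref{fact:divergence} for the companions, and translate the resulting parameter shift back to the quasi-geodesic parameterization using the quasi-isometric comparison between the parameters of $c$ and of its companion (the paper cites the explicit bounds from Dru\c{t}u--Kapovich for this last step, where you instead use $d_S(c(t_0),c(t_0+\Delta_2))\ge\lambda\Delta_2-\varepsilon$). In that sense there is no genuinely different decomposition.

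Two points worth noting. First, you explicitly flag that Fact~\ref{fact:divergence} as stated concerns only the \emph{minimal} $t_0$, whereas the argument (yours and the paper's alike) needs the divergence estimate at the parameter $\tau_0$ produced by the Morse projection, which need not be minimal; you correctly observe that the compactness proof of Fact~\ref{fact:divergence} gives the stronger statement. The paper's proof uses this implicitly without comment, so your remark is a genuine improvement in rigor. Second, you raise the sign/coarse-monotonicity concern: the lower bound $|\tau_1-\tau_0|\ge\lambda\Delta_2-\varepsilon-2K$ alone does not force $\tau_1\ge\tau_0+s$, since a $(\lambda,\varepsilon)$-quasi-geodesic with $\lambda<1$ can in principle backtrack along its Morse companion. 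Your resolution (``coarse monotonicity of nearest-point projection'') is stated but not verified, and the alternative contradiction sketch leaves the case $\sigma_N<\tau_N$ without a complete argument; as written, the progress estimate $d_S(c_N(1),c_N(t_N+N))\ge\lambda(t_N+N)-\varepsilon$ combined with $d_S(c_N(1),c_N(t_N))\le t_N-1$ gives $\sigma_N-\tau_N\gtrsim(\lambda-1)t_N+\lambda N$, which can be negative when $\lambda<1$ and $t_N$ is large, so backtracking is not excluded by that inequality. So you have correctly identified a subtlety the paper passes over in silence, but you have not closed it; to finish one should argue (using that the minimal such $t_0$ is controlled, or that once the companions diverge the distance stays large for \emph{all} larger parameters, not just $\tau_0+s$) that whichever of $\tau_1\gtrless\tau_0$ occurs, the projected point $\tilde{c}(\tau_1)$ still lies in the region where $d_S(\tilde{c}(\cdot),\Img(\tilde{c}'))>R$.

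Overall: same method as the paper, but you surface two real subtleties the paper elides; the second (sign of $\tau_1-\tau_0$) is acknowledged rather than resolved, so the proof is incomplete at that point.
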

\begin{proof}
Let $r$, resp. $r'$ be two geodesic rays stemming from $c(1)$, resp. $c'(1)$ that are of Hausdorff distance $K$ from $c$, resp. $c'$ which are guaranteed together with the constant $K\in\Nat$ depending on $\delta$, $\lambda$ and $\varepsilon$ by Lemma~\ref{lem:Morse}. Let $\Delta'\in\Nat$ be the constant from Fact~\ref{fact:divergence} depending on $\delta$ and $D=d_S(r(1),r'(1))$. Set $\Delta_1:=2K+\Delta'$. Then either $r$ and $r'$ are asymptotic in which case $c$ and $c'$ are asymptotic as well, and by triangle inequality, for every $t\in\Nat$ we have $d_S(c(t),\Img(c'))\leq K+\Delta'+K=\Delta_1$. Suppose now that there is $t\in\Nat$ such that $d_S(c(t),\Img(c'))>\Delta_1$ and let $s\in\Nat$ be such that $d_S(c(t),r(s))\leq K$. Then by triangle inequality $d_S(r(s),\Img(r'))>\Delta'$, thus by Fact~\ref{fact:divergence} there is $s'\in\Nat$ such that \[d_S(r(s+s'),\Img(r'))>\Delta_1+10\delta+C+2K\] and moreover, $s'$ depends only on $\delta$, $D$, and $C$. Then again, there is $\Delta_2\in\Nat$ such that $d_S(r(s+s'),\Img(c))=d_S(c(t+\Delta_2),r(s+s'))\leq K$. We have that $\Delta_2$ only depends on $K$, $\lambda$, $\varepsilon$, and $s'$. Indeed, for every $x,y\in\Nat$ there are $x',y'\in\Nat$ such that $d_S(cx),r(x'))\leq K$ and $d_S(r(y),c(y'))\leq K$, and the dependence of $x'$ on $x$, resp. of $y'$ on $y$ is as follows, denoting $B:=\varepsilon+K$:
\[\lambda^{-1} x-B\leq x'\leq \lambda x+B,\] resp. \[\lambda^{-1}(y-B)\leq y'\leq \lambda (y+B),\] which is proved e.g. in \cite[(11.3) and (11.4) p.375]{DruKabook}.

By triangle inequality we get \[\begin{split}d_S(c(t+\Delta_2),\Img(c'))\geq  d_S(r(s+s'),\Img(r'))-d_S(c(t+\Delta_2),r(s+s'))\\ -d(\Img(r'),\Img(c'))>\Delta_1+10\delta+C+2K-K-K=\Delta_1+C+10\delta.\end{split}\]
\end{proof}

We finish this subsection with a simple but useful lemma which shows that concatenating certain geodesic segments results in a quasi-geodesic.

\begin{lemma}\label{lem:gluingtolocalquasigeo}
	Let $c_1:[n_1,n_2]\subseteq\Nat\to G$ and $c_2:[m_1,m_2]\subseteq\Nat\to G$ be two geodesics such that there is $n_1<n<n_2$ satisfying $n_2-n>n$ and $n>8\delta$, and such that $c_1(n)=c_2(m_1)$ and for every $i\leq n$ we have $d_S(c_1(n+i),c_2(m_1+i))\leq 2\delta$. Then the map $c:[n_1,n+m_2-m_1]\to G$ defined by \[c(i):=\begin{cases}
		c_1(i) & \text{if }i\leq n\\
		c_2(i-n+m_1) & \text{if }i>n
	\end{cases}\] 
	is an $n'$-local $(1,2\delta)$-quasi geodesic for every $n'\leq n$.
\end{lemma}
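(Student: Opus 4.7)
The plan is to verify the two quasi-geodesic inequalities
\[
|s-t|-2\delta\leq d_S(c(s),c(t))\leq |s-t|
\]
directly for an arbitrary pair of indices $s\leq t$ in the domain with $t-s\leq n'\leq n$, by splitting into cases based on where $s$ and $t$ lie relative to the gluing index $n$. Three cases arise: both $s,t\leq n$ (so $c(s),c(t)$ lie on the geodesic $c_1$); both $s,t>n$ (so $c(s),c(t)$ lie on the geodesic $c_2$); and $s\leq n<t$, the only case where the concatenation is actually used.

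In the first two cases both bounds are immediate because $c$ coincides with an honest geodesic there, giving $d_S(c(s),c(t))=|s-t|$. For the upper bound in the straddling case $s\leq n<t$, I factor the distance through the common point $c(n)=c_1(n)=c_2(m_1)$ and apply the triangle inequality:
\[
d_S(c(s),c(t))\leq d_S(c_1(s),c_1(n))+d_S(c_2(m_1),c_2(t-n+m_1))=(n-s)+(t-n)=t-s.
\]

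The heart of the argument is the lower bound in the straddling case. Write $a:=n-s\geq 0$ and $b:=t-n>0$, so that $a+b=t-s\leq n'\leq n$. The hypothesis $d_S(c_1(n+i),c_2(m_1+i))\leq 2\delta$ applies at $i=b$ provided $b\leq n$, which holds since $b\leq n'\leq n$; moreover, $c_1(n+b)$ is well defined because $n+b\leq 2n<n_2$ by the assumption $n_2-n>n$, and $c_2(m_1+b)=c(t)$ is well defined since $t$ lies in the domain of $c$. Since $c_1$ is a geodesic, $d_S(c_1(n-a),c_1(n+b))=a+b$, so the triangle inequality gives
\[
d_S(c(s),c(t))=d_S(c_1(n-a),c_2(m_1+b))\geq d_S(c_1(n-a),c_1(n+b))-d_S(c_1(n+b),c_2(m_1+b))\geq (a+b)-2\delta,
\]
which is exactly the desired bound.

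There is no genuine obstacle; the only thing to be careful about is the bookkeeping of domains, namely that $n+b\leq n_2$ (this is where the assumption $n_2-n>n$ is used) and that the fellow-travel hypothesis is invoked with an admissible index $i=b\leq n$ (which follows from $t-s\leq n'\leq n$). The extra hypothesis $n>8\delta$ does not seem to enter this argument; presumably it is stated so that the conclusion can be immediately fed into Lemma~\ref{lem:localquasigeo} to upgrade the local quasi-geodesic to a global one.
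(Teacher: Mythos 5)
Your proof is correct and uses essentially the same argument as the paper: split into cases according to whether $s,t$ lie on the same side of the gluing index $n$, and in the straddling case obtain the lower bound by applying the triangle inequality through the extension $c_1(n+b)$, which is $2\delta$-close to $c(t)=c_2(m_1+b)$. You are a bit more careful than the paper's terse one-line computation (you also verify the upper bound in the straddling case and note the domain bookkeeping), and your observation that $n>8\delta$ is unused here and present only to feed Lemma~\ref{lem:localquasigeo} is accurate.
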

\begin{proof}
	Pick $k_2>k_1\in [n_1,n+m_2-m_1]$ such that $k_2-k_1\leq n'$. If either $k_1,k_2\leq n$ or $k_1,k_2\geq n$, then by definition we have $d_S(c(k_1),c(k_2))=k_2-k_1$, so we may assume that $k_1<n$ and $k_2>n$. But then we have \[d_S(c(k_1),c(k_2)) \geq d_S(c(k_1),c_1(k_2))-d_S(c_1(k_2),c_2(k_2))=|k_1-k_2|-2\delta\]
\end{proof}

\subsection{The Gromov boundary and its presentation}
Instead of representing elements of the boundary by geodesic rays it will be more useful for us to use the following notion.
\begin{definition}
A \emph{directed system of geodesics (DSG)} is a map $m:G\to S$ such that
\begin{enumerate}
	\item for every $g\in G$ the sequence $g, gm(g),gm(g)m(gm(g)),\ldots$ is a geodesic ray, denoted as $c_m^g:\Nat\to G$;
	\item for any $g,h\in G$ the geodesic rays $c_m^g$ and $c_m^h$ are asymptotic.
\end{enumerate}
\end{definition}

It follows that each DSG defines a unique element of the Gromov boundary $\partial G$. The next proposition shows that conversely each element of the boundary is defined by some DSG.

\begin{proposition}\label{prop:existenceofDSG}
Let $c:\Nat\to G$ be a geodesic ray. Then there exists DSG $m:G\to S$ such that for every $g\in G$ the geodesic rays $c$ and $c_m^g$ are at bounded distance.
\end{proposition}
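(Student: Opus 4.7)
The plan is to produce $m$ as a diagonal limit of ``steering'' maps $m_n\colon G\to S$, where $m_n(g)$ names the first step of a geodesic from $g$ toward the point $c(n)$ of the given ray; letting $n\to\infty$ the target recedes into the boundary point $[c]\in\partial G$. Concretely, I would fix an arbitrary total order on the finite set $S$ and, for each $n\in\Nat$ and each $g\neq c(n)$, set
\[m_n(g):=\min\bigl\{s\in S : d_S(gs,c(n))=d_S(g,c(n))-1\bigr\}.\]
This set is nonempty, containing the first step of any geodesic from $g$ to $c(n)$, so by a direct induction the iteration $g,gm_n(g),gm_n(g)m_n(gm_n(g)),\ldots$ retraces a genuine geodesic from $g$ to $c(n)$.

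Next I would enumerate $G=\{g_k\}_{k\in\Nat}$ and, using finiteness of $S$, extract by a standard diagonal argument a subsequence $(n_j)$ along which $m_{n_j}(g_k)$ is eventually constant in $j$ for every $k$; I then define $m(g):=\lim_j m_{n_j}(g)$. To check that each $c_m^g$ is a geodesic ray, I fix $g$ and $N\in\Nat$ and observe that stabilisation of the diagonal limit at the finitely many points $c_m^g(1),\ldots,c_m^g(N)$ means that for all sufficiently large $j$ the prefix $c_m^g(1),\ldots,c_m^g(N+1)$ coincides with the initial segment of the $m_{n_j}$-iteration from $g$, and hence is the initial segment of a genuine geodesic from $g$ to $c(n_j)$. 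In particular $d_S(g,c_m^g(N+1))=N$, so $c_m^g$ is a geodesic ray.

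For the asymptotic property, I would use that the geodesic segments $[g,c(n_j)]$ above all extend the prefix $c_m^g\upharpoonright[1,N+1]$ and have endpoints $c(n_j)$ tending to $[c]\in\partial G$; local finiteness of $(G,d_S)$ together with an Arzel\`a--Ascoli-style subsequential extraction yields a geodesic ray from $g$ representing $[c]$, which must coincide with $c_m^g$ on every finite prefix by the previous step. Thus $c_m^g$ is itself a geodesic ray from $g$ to $[c]$, and by the definition of $\partial G$ it is asymptotic to $c$; the ``Consequently'' clause of Proposition~\ref{prop:closenessofgeo} even furnishes the quantitative bound $d_S(c_m^g(t),c(t))\le d_S(g,c(1))+4\delta$.

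The main conceptual hazard is coherence: one must ensure that iterated application of $m$ actually yields a geodesic, not merely a sequence of unit-distance steps. This is the payoff of constructing $m$ through the $m_n$'s, because each individual $m_n$ is globally coherent (it everywhere steers toward the single target $c(n)$), and the diagonal limit inherits this coherence on any prescribed finite segment -- which is all that is needed to run the geodesic check step by step.
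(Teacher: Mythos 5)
Your argument is correct, and it follows the same overall architecture as the paper's proof: construct a family of ``steering maps'' $m_n$ aimed at the points $c(n)$, extract a limit $m$ by compactness of $S^G$, and verify that the limit is a DSG asymptotic to $c$. The genuine difference is how the steering maps are built. The paper invokes the fact that hyperbolic groups are geodesically automatic to obtain a \emph{distinguished geodesic structure} $\lambda:G\to S^*$ and then takes a cluster point of the shifts $\sigma(c(n))(l)$ of the derived ``last-letter'' map $l$. You instead fix a total order on $S$ and define $m_n(g)$ greedily as the minimal $s\in S$ with $d_S(gs,c(n))=d_S(g,c(n))-1$; iterating $m_n$ from $g$ then retraces a bona fide geodesic to $c(n)$ by a one-line induction, and no structural input about hyperbolic groups is needed. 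Your version is strictly more elementary (it works verbatim for any finitely generated group, not just hyperbolic ones) and avoids a nontrivial citation; the paper's version buys nothing extra for the purposes of this proposition, it simply uses a tool already on the shelf for hyperbolic groups. Two small points worth making explicit in a final write-up: (i) the partial definition of $m_n$ on $G\setminus\{c(n)\}$ is harmless because $c(n)\to\infty$, so for each fixed $g$ the value $m_{n_j}(g)$ is defined for all large $j$ and the diagonal extraction goes through; (ii) once each $c_m^g$ is shown asymptotic to $c$, condition (2) of the DSG definition (pairwise asymptoticity of the $c_m^g$) follows because bounded Hausdorff distance between rays is an equivalence relation — this step is used but not stated.
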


Before giving the proof we need few definitions. Let $G$ be a group generated by a finite symmetric set $S\subseteq G$. Denote by $S^*$ the set of all words over $S$ as the alphabet and for every $w\in S^*$ let $w_G\in G$ be the evaluation of this word, i.e. the group element this element represents. Say that $G$ has a \emph{distinguished geodesic structure} if there is an assignment $\lambda:G\to S^*$ such that for every $g\in G$
\begin{itemize}
	\item $\lambda(g)_G=g$ and moreover $|\lambda(g)|=d_S(g,1)$;
	\item writing $\lambda(g)=s_1\ldots s_n$ we have that $\lambda((s_1\ldots s_{n-1})_G)=s_1\ldots s_{n-1}$.
\end{itemize}
Notice that every \emph{geodesically automatic group} (see \cite[Section 5.3]{HoReRobook}) has a distinguished geodesic structure, in particular every hyperbolic group (see \cite[Section 5.5.1]{HoReRobook}).
\begin{proof}[Proof of Proposition~\ref{prop:existenceofDSG}]
Let $\lambda:G\to S^*$ be a function witnessing that $G$ has a distinguished geodesic structure. For every $g\in G\setminus\{1_G\}$ we have that $\lambda(g)$ is a non-empty word $s_1\ldots s_n$ and so we may define $l(g)=s_n^{-1}$ to be the inverse of the last letter of the word $\lambda(g)$. We extend $l$ to the whole $G$ by setting $l(1_G)=*$. Now view $l$ as an element in the Bernoulli shift $(S\cup\{*\})^G$. Denote by $\sigma:G\times (S\cup\{*\})^G\to (S\cup\{*\})^G$ the shift action and consider the sequence $\big(l_n:=\sigma(c(n))(l)\big)_{n\in\Nat}$. Let $m\in (S\cup\{*\})^G$ be a cluster point of this sequence. Notice there is no $g\in G$ such that $m(g)=*$, thus $m\in S^G$. We claim that $m$ is the DSG as required.\smallskip

First we verify that $m$ is a DSG. Notice that for every $g\in G\setminus\{1_G\}$ we have that \[g, gl(g), gl(g)l(gl(g)),\ldots,1_G\] is a geodesic connecting $g$ and $1_G$. In particular, for every $n<d_S(g,1_G)$ we have that $g,gl(g),gl(g)l(gl(g)),\ldots$, where the number of terms is $n$, is a geodesic. Similarly, for $l_h:=\sigma(h)(l)$, where $h\in G$, and every $g\in G\setminus\{h\}$ we have that 
\begin{equation}\label{eq:DSG}
g,gl_h(g),gl_h(g)l_h(gl_h(g)),\ldots, h
\end{equation} is a geodesic connecting $g$ and $h$. Notice that $h$ is the unique element $f\in G$ such that $l_h(f)=*$. Using compactness, one can then easily check that for every $g\in G$ and every $n\in\Nat$ we have that $g,gm(g),gm(g)m(gm(g)),\ldots$, where the number of terms is $n$, is a geodesic, thus $c_m^g$ is a geodesic ray.

Next we show that for every $g,h\in G$ the geodesic rays $c_m^g$ and $c_m^h$ are asymptotic. By \eqref{eq:DSG}, for every $f\in G$ we have that \[g,gl_f(g),gl_f(g)l_f(gl_f(g)),\ldots,f\] is a geodesic of length $d_S(g,f)$ connecting $g$ and $f$ which we write as a map $c_f^g:[1,d_S(g,f)]\to G$, where $c_f^g(1)=g$ and $c_f^g(d_S(g,f))=f$. Similarly, we define $c_f^h:[1,d_S(h,f)]\to G$ for $h$. By Proposition~\ref{prop:closenessofgeo} applied to $c_f^g$ and $c_f^h$ we get that for every $t\leq \min\{d_S(g,f),d_S(h,f)\}-2\delta$ we have $d_S(c_f^g(t),c_f^h(t))\leq d_S(g,h)+4\delta$. Since $c_m^g$ is the pointwise limit of $(c_{c(n_k)}^g)_{k\in\Nat}$, where $(n_k)_{k\in\Nat}$ is a sequence of integers such that $\lim_{k\to\infty}\sigma(c(n_k))(l)=m$, and similarly for $c_m^h$, we get that for every $t\in\Nat$, $d_S(c_m^g(t),c_m^h(t))\leq d_S(g,h)+4\delta$.\smallskip

It remains to check that $c$ and $c_m^{1_G}$ are asymptotic. By the paragraph above, $c_m^{1_G}$ is the pointwise limit of $(c_{c(n_k)}^{1_G})_{k\in\Nat}$. For each $k\in\Nat$, $c_{c(n_k)}^{1_G}$ is a geodesic between $1_G$ and $c(n_k)$, thus uniformly close to the geodesic segment $c\upharpoonright [1,n_k]$. It follows the limit, $c_m^{1_G}$, is uniformly close to $c$.
\end{proof}

\begin{definition}
We define a compact non-Hausdorff topology $\tau$ on the set $\DD$ of all DSGs on $G$ as the topology generated by basic open neighborhoods \[\NN_m^{l,D}:=\big\{m'\in\DD\colon \forall i\leq l\;\big(d_S(c_m^{1_G}(i),c_{m'}^{1_G}(i))\leq D\big)\big\}\] where $m\in\DD$, $l>8\delta$ and $D\geq 2\delta$.
\end{definition}
The following immediately follows from Lemma~\ref{lem:fellowtravel}.

\begin{proposition}\label{prop:fellowtravel}
For every $l>8\delta$ and $D, D'\geq 2\delta$ there exists $l'$ such that for every $m\in\DD$, $\NN_m^{l',D'}\subseteq \NN_m^{l,D}$.
\end{proposition}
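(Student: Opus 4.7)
The plan is to reduce this immediately to Lemma~\ref{lem:fellowtravel}. Unpacking the definition, I want: given $l > 8\delta$ and $D, D' \geq 2\delta$, to find $l'$ so that whenever $m' \in \DD$ satisfies $d_S(c_m^{1_G}(i), c_{m'}^{1_G}(i)) \leq D'$ for all $i \leq l'$, then in fact $d_S(c_m^{1_G}(i), c_{m'}^{1_G}(i)) \leq D$ for all $i \leq l$. The key observation is that $c_m^{1_G}$ and $c_{m'}^{1_G}$ are both geodesic rays starting at $1_G$, so they are geodesic rays stemming from the same point, and the hypothesis says they $(l', D')$-fellow travel while the desired conclusion says they $(l, D)$-fellow travel.

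With this translation in hand, I would apply Lemma~\ref{lem:fellowtravel} with its parameter $l'$ taken to be our $l$ and its parameter $D$ taken to be our $D'$ (which is $\geq 2\delta$ by assumption). The lemma then produces a constant $l''$ such that any two geodesic rays from the same point that $(l'', D')$-fellow travel must $(l, 2\delta)$-fellow travel. I set $l' := \max(l'', 8\delta+1)$ so that the neighborhood $\NN_m^{l', D'}$ is well-defined. For any $m' \in \NN_m^{l', D'}$, the rays $c_m^{1_G}$ and $c_{m'}^{1_G}$ satisfy the hypothesis of the lemma with parameter $l''$ (since $l' \geq l''$), so they $(l, 2\delta)$-fellow travel. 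As $D \geq 2\delta$, this gives $d_S(c_m^{1_G}(i), c_{m'}^{1_G}(i)) \leq 2\delta \leq D$ for all $i \leq l$, i.e. $m' \in \NN_m^{l, D}$, as required.

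There is no genuine obstacle here: the real content is packaged in Lemma~\ref{lem:fellowtravel} and, behind it, Lemma~\ref{lem:closenessofgeo}, which encodes the $\delta$-hyperbolic fact that rays from a common basepoint which remain boundedly close for long enough must be uniformly $2\delta$-close on the initial segment. The proposition is essentially a restatement of that phenomenon in the language of the basic neighborhoods defining $\tau$.
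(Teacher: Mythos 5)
Your proof is correct and is exactly the intended unpacking of the paper's one-line proof, which simply cites Lemma~\ref{lem:fellowtravel}. The translation from the $\NN_m^{l,D}$ notation to the fellow-travelling language and the observation that $c_m^{1_G}$, $c_{m'}^{1_G}$ stem from the common basepoint $1_G$ are precisely the (routine) steps the paper leaves implicit.
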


The next lemma provides another definition of the neighborhoods $\NN_m^{l,D}$ that will be useful in the sequel.
\begin{lemma}\label{lem:defofnbhds}
Let $m,m'\in\DD$ and $g\in G$. Fix also $l>8\delta$ and $D\geq 2\delta$. Then $gm\in\NN_{m'}^{l,D}$ if and only if for every $i\leq l$ \[d_S(g c_m^{g^{-1}}(i),c_{m'}^{1_G}(i))\leq D.\]
\end{lemma}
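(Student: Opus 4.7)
The plan is to reduce this lemma to a single pointwise identity and then invoke the definition of the basic open neighborhood. First I would fix the $G$-action on $\DD$ that is implicit in the statement: induced from the translation action $G \curvearrowright \partial G$, it is given by $(gm)(h) := m(g^{-1}h)$ for $g,h \in G$ and $m \in \DD$. A quick verification using the fact that $G$ acts on itself by isometries (preserving geodesics and Hausdorff distances) shows that $gm$ is indeed a DSG: the ray $c_{gm}^h$ becomes the $g$-translate of $c_m^{g^{-1}h}$, and asymptoticity for different base points carries over from the asymptoticity built into $m$.

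The main step is to establish the identity
\[
c_{gm}^{1_G}(i) \;=\; g \cdot c_m^{g^{-1}}(i)\qquad \text{for every } i\in\Nat,
\]
by induction on $i$. For $i=1$, both sides equal $1_G$. For the inductive step, I would unfold the recursive definition
\[
c_{gm}^{1_G}(i+1) \;=\; c_{gm}^{1_G}(i)\cdot (gm)\bigl(c_{gm}^{1_G}(i)\bigr),
\]
apply the inductive hypothesis to rewrite $c_{gm}^{1_G}(i)$ as $g\cdot c_m^{g^{-1}}(i)$, substitute the formula $(gm)(h) = m(g^{-1}h)$ so that the inner $g^{-1}\cdot g$ cancels inside the argument of $m$, and finally factor the leading $g$ out to recognize $g \cdot c_m^{g^{-1}}(i+1)$.

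Once the identity is in hand, the lemma is immediate: by the very definition of $\NN_{m'}^{l,D}$, the condition $gm \in \NN_{m'}^{l,D}$ is equivalent to $d_S\bigl(c_{gm}^{1_G}(i), c_{m'}^{1_G}(i)\bigr) \leq D$ for all $i \leq l$, and replacing $c_{gm}^{1_G}(i)$ by $g\cdot c_m^{g^{-1}}(i)$ yields exactly the claimed inequality. There is no real obstacle in this lemma — it is a definition-unpacking — and the only point requiring care is ensuring the correct variance of the $G$-action on $\DD$: the $g^{-1}$ on the input of $m$ is forced by the left-action convention, and it is precisely what makes $c_m^{g^{-1}}$, rather than $c_m^{1_G}$, the ray that gets translated by $g$.
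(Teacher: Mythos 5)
Your proposal is correct and takes essentially the same approach as the paper: both reduce the lemma to the identity $c_{gm}^{1_G}(i)=g\,c_m^{g^{-1}}(i)$ and then unwind the definition of $\NN_{m'}^{l,D}$. The only superficial difference is that you prove the identity by induction on $i$, while the paper writes out the unrolled formula $c_{g\rho}^{1_G}(t)=1_G\cdot\rho(g^{-1})\cdot\rho(g^{-1}\rho(g^{-1}))\cdots$ and factors $g$ out directly; your explicit statement of the shift convention $(gm)(h)=m(g^{-1}h)$ matches what the paper uses implicitly.
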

\begin{proof}
By definition, $gm\in\NN_{m'}^{l,D}$ means that for every $i\leq l$ we have $d_S(c_{gm}^{1_G}(i),c_{m'}^{1_G}(i))\leq D$.  Notice that also by definition, for every $\rho\in\DD$, $g\in G$, and $t\in\Nat$ we have \[c_{g\rho}^{1_G}(t)=1_G\cdot \rho(g^{-1})\cdot \rho(g^{-1}\rho(g^{-1}))\cdots\] where the number of terms is $t$, and this is by definition equal to \[g\cdot \big(g^{-1}\cdot \rho(g^{-1})\cdot\rho(g^{-1}\rho(g^{-1}))\cdots\big)=g c_\rho^{g^{-1}}(t)\] where the number of terms in the parenthesis on the left-hand side is $t$. This shows that for every $i\leq n$ we have $d_S(c_{gm}^{1_G}(i),c_{m'}^{1_G}(i))\leq D$ if and only if \[d_S(g c_m^{g^{-1}}(i),c_{m'}^{1_G}(i))\leq D.\]
\end{proof}

Let us denote by $Q:\DD\to\partial G$ the canonical surjective map.

\begin{definition}
	For every $x\in\partial G$ and $l\in\Nat$ we define the set \[\NN_x^l:=\{y\in\partial G\colon \forall m,m'\in\DD\; (Q(m)=x\wedge Q(m')=y\Rightarrow m'\in\NN_m^{l,2\delta})\}.\]
\end{definition}
It is straightforward that the sets $\NN_x^l$, $x\in\partial G$, $l\in\Nat$, form a basis of topology of $\partial G$.

One can consider also the pointwise convergence topology on $\DD$. This is a compact Hausdorff topology which is strictly finer than $\tau$; in particular every pointwise convergence sequence $(m_n)_{n\in\Nat}\subseteq\DD$ also converges in $\tau$.

\begin{proposition}
The Hausdorff quotient of $(\DD,\tau)$ is homeomorphic to $\partial G$.
\end{proposition}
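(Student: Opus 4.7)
The plan is to show $Q:\DD\to\partial G$ is continuous, whence it descends to a homeomorphism $\tilde{Q}:\DD/\ker Q\to\partial G$ that identifies $\partial G$ as the Hausdorff quotient of $(\DD,\tau)$.

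I would verify continuity of $Q$ pointwise. Fix $m_0\in\DD$, set $y:=Q(m_0)$, and consider an arbitrary basic neighborhood $\NN_y^{l^*}$ of $y$. Using Proposition~\ref{prop:fellowtravel} with parameters $l^*$, $D=2\delta$, $D'=6\delta$, choose $L$ large enough that $(L,6\delta)$-fellow-traveling geodesic rays from a common base point automatically $(l^*,2\delta)$-fellow travel. My claim is $\NN_{m_0}^{L,2\delta}\subseteq Q^{-1}(\NN_y^{l^*})$. Given $\mu\in\NN_{m_0}^{L,2\delta}$ and arbitrary representatives $m\in Q^{-1}(y)$ and $\mu'\in Q^{-1}(Q(\mu))$, the consequence of Proposition~\ref{prop:closenessofgeo} yields that $c_m^{1_G}$ and $c_{m_0}^{1_G}$ are pointwise $2\delta$-close (both representing $y$) and that $c_\mu^{1_G}$ and $c_{\mu'}^{1_G}$ are pointwise $2\delta$-close (both representing $Q(\mu)$). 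Combined with $d_S(c_{m_0}^{1_G}(i),c_\mu^{1_G}(i))\leq 2\delta$ for $i\leq L$, the triangle inequality gives $d_S(c_m^{1_G}(i),c_{\mu'}^{1_G}(i))\leq 6\delta$ for $i\leq L$; Proposition~\ref{prop:fellowtravel} then upgrades this to $\mu'\in\NN_m^{l^*,2\delta}$. Since $m,\mu'$ were arbitrary, $Q(\mu)\in\NN_y^{l^*}$, as required.

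With continuity in hand, $Q$ descends to a continuous bijection $\tilde{Q}:\DD/\ker Q\to\partial G$ (surjective by Proposition~\ref{prop:existenceofDSG}, injective by the definition of $\ker Q$). Compactness of $(\DD,\tau)$ transfers to the quotient, so $\tilde{Q}$ is a continuous bijection from a compact space onto the Hausdorff space $\partial G$, hence a homeomorphism. Since $\ker Q$ is closed in $\DD\times\DD$ (as the preimage of the diagonal under the continuous map $Q\times Q$ to the Hausdorff space $\partial G\times\partial G$), the quotient $\DD/\ker Q$ is naturally identified with the Hausdorff quotient of $(\DD,\tau)$.

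The main obstacle is the continuity step: the strict $2\delta$-bound in the definition of $\NN_y^{l^*}$ precludes a direct triangle-inequality argument, which yields only a $6\delta$-bound. Proposition~\ref{prop:fellowtravel} is the indispensable geometric input that converts this looser bound into the required $2\delta$-bound via a sufficiently long fellow-traveling window $L$; a crucial simplifying feature is that $m_0$ itself serves as a representative of $y$, which keeps the ``for all representatives'' quantifier in the definition of $\NN_y^{l^*}$ manageable via pointwise $2\delta$-closeness of asymptotic rays from $1_G$.
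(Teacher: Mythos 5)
Your continuity argument is correct and takes a genuinely different route from the paper's. The paper proves continuity of $Q$ by a sequence/subsequence argument: given a $\tau$-convergent sequence $\rho_n\to_\tau\rho$, it extracts (using properness of $(G,d_S)$) a pointwise-convergent sub-subsequence and uses continuity of $Q$ with respect to the pointwise topology, relying implicitly on first-countability of $\tau$. You instead give a direct neighborhood-inclusion argument $\NN_{m_0}^{L,2\delta}\subseteq Q^{-1}(\NN_{Q(m_0)}^{l^*})$, which is cleaner and bypasses the compactness/diagonalization machinery. One small slip in the citation: the "Consequently" part of Proposition~\ref{prop:closenessofgeo} gives a $4\delta$ bound for asymptotic rays from a common base point (it gives $d_S(c(1),c'(1))+4\delta$ with $d_S(c(1),c'(1))=0$); the $2\delta$ bound you want actually follows from Lemma~\ref{lem:closenessofgeo}. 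This does not affect the argument, since Proposition~\ref{prop:fellowtravel} accepts any $D'\geq 2\delta$, so you could equally run it with $D'=10\delta$.

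The weak point is the final sentence: closedness of $\ker Q$ in $\DD\times\DD$ does \emph{not} imply that $\DD/\ker Q$ is the Hausdorff quotient. For instance, on $\Rea$ the closed equivalence relation identifying all nonnegative reals to a single point yields a Hausdorff quotient, which is certainly not the universal Hausdorff quotient of $\Rea$ (namely $\Rea$ itself). What you actually need is that $\ker Q$ is contained in the kernel of the universal Hausdorff quotient map $f:\DD\to X$, i.e.\ that no Hausdorff quotient of $\DD$ can separate $m$ from $m'$ whenever $Q(m)=Q(m')$. This does hold here, but for a different reason: if $Q(m)=Q(m')$ then $c_m^{1_G}$ and $c_{m'}^{1_G}$ are asymptotic rays from $1_G$, hence by Lemma~\ref{lem:closenessofgeo} they are pointwise $2\delta$-close, so $m'\in\NN_m^{l,D}$ and $m\in\NN_{m'}^{l,D}$ for all $l,D$; that is, $m$ and $m'$ are $\tau$-topologically indistinguishable, and thus are identified by every continuous map to a Hausdorff space. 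Combined with $\ker f\subseteq\ker Q$ (which holds because $Q$ factors through $f$ by universality, using your continuity result), this gives $\ker f=\ker Q$ and hence the desired identification. I would note that the paper's own proof is also terse at exactly this point (it asserts without comment that $\bar Q$ is a bijection), so the gap is shared; but the closed-kernel criterion you invoke is simply the wrong reason and should be replaced by the topological-indistinguishability observation.
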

\begin{proof}
Denote the Hausdorff quotient of $(\DD,\tau)$ by $X$. By the universal property of $X$, if the surjective map $Q:\DD\to \partial G$ is continuous when $\DD$ is equipped with $\tau$, then we have the following commutative diagram
\[
\begin{tikzcd}
	(\DD,\tau)  \arrow{d}{f} \arrow{dr}{Q}\\
	X \arrow{r}{\overline{Q}}  & \partial G
\end{tikzcd}
\]
where $f:(\DD,\tau)\to X$ is the quotient map and $\bar Q$ is the unique continuous map making the diagram commute. It is then clear that $\bar Q$ is a bijection and as a continuous bijection between two compact Hausdorff space it is a homeomorphism. Let $(\rho_n)_n\subseteq\DD$ be a sequence converging to $\rho\in\DD$ in $\tau$. Let $(\rho_{l_n})_n$ be an arbitrary subsequence of $(\rho_n)_n$. By properness of $(G,d_S)$, using a simple diagonalization, there is a further subsequence $(\rho_{k_n})_n$ of $(\rho_{l_n})_n$ that converges pointwise to some $\rho'\in\DD$. Since $(\rho_{k_n})_n\to_\tau \rho$ we get that for all $m\in\Nat$, $d(c_{\rho'}^{1_G}(m),c_\rho^{1_G}(m))\leq 2\delta$. Therefore $c_{\rho'}^{1_G}$ and $c_\rho^{1_G}$ are asymptotic and so $Q(\rho')=Q(\rho)$. Since the map $Q$ is, by definition of the topology of $\partial G$, continuous with respect to the pointwise convergence, we get $\lim_{n\to\infty} Q(\rho_{k_n})=Q(\rho')=Q(\rho)$. Since the subsequence $(\rho_{l_n})_n$ was arbitary we get $\lim_{n\to\infty} Q(\rho_n)=Q(\rho)$. This finishes the proof.
\end{proof}

The following simple lemma is analogous to the basic fact that each open cover in a compact metric space has a positive Lebesgue number.

\begin{lemma}\label{lem:Lebesgue}
Let $(U_i)_{i=1}^n$ be an open cover of $\partial G$. Then there exists $l\in\Nat$ such that for every $m,m'\in\DD$, if we have $m'\in\NN_m^{l,2\delta}$ then there is $i\leq n$ so that $Q(m),Q(m')\in U_i$.
\end{lemma}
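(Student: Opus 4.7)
The plan is to argue by contradiction in the spirit of the standard Lebesgue number lemma, using the pointwise-convergence compactness of $\DD$ in place of a metric. Suppose no such $l$ exists. Then for every $l\in\Nat$ there are $m_l,m'_l\in\DD$ with $m'_l\in\NN_{m_l}^{l,2\delta}$ such that no $U_i$ from the cover contains both $Q(m_l)$ and $Q(m'_l)$.

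Next, I would extract convergent subsequences. Since $(G,d_S)$ is proper, the ball of any fixed radius around $1_G$ is finite, so each coordinate $m_l(g)\in S$ and $m'_l(g)\in S$ takes finitely many values; a standard diagonal argument then yields a single subsequence $(l_k)_k$ along which both $m_{l_k}\to m$ and $m'_{l_k}\to m'$ pointwise for some $m,m'\in\DD$ (that the pointwise limits are again DSGs follows because for any $g\in G$ eventually $m_{l_k}(g)=m(g)$, so all the geodesic conditions pass to the limit, just as in the proof of Proposition~\ref{prop:existenceofDSG}).

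Now I would check that $Q(m)=Q(m')$. Fix any $i\in\Nat$. For $k$ large enough we have $l_k\geq i$ and $c_{m_{l_k}}^{1_G}$ agrees with $c_m^{1_G}$ on $[1,i]$ by pointwise convergence (and similarly for $m'$). The assumption $m'_{l_k}\in\NN_{m_{l_k}}^{l_k,2\delta}$ then forces $d_S(c_m^{1_G}(i),c_{m'}^{1_G}(i))\leq 2\delta$. Since this holds for every $i$, the rays $c_m^{1_G}$ and $c_{m'}^{1_G}$ are asymptotic, hence $Q(m)=Q(m')$.

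Finally, choose $i_0\leq n$ with $Q(m)=Q(m')\in U_{i_0}$. Because pointwise convergence on $\DD$ is finer than $\tau$ and $Q\colon(\DD,\tau)\to\partial G$ is continuous (established in the proof of the preceding proposition), $Q$ is also continuous for the pointwise topology, so $Q(m_{l_k})\to Q(m)$ and $Q(m'_{l_k})\to Q(m')$. For all sufficiently large $k$ both $Q(m_{l_k})$ and $Q(m'_{l_k})$ then lie in $U_{i_0}$, contradicting the choice of the sequences. The main (really only) subtlety I expect is being careful about which topology on $\DD$ is being used at each step, specifically noting that the two-sided control in $\NN_{m_{l_k}}^{l_k,2\delta}$ together with \emph{pointwise} (not $\tau$-) convergence is what yields $Q(m)=Q(m')$.
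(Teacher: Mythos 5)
Your proof is correct and follows essentially the same contradiction argument as the paper: extract pointwise-convergent subsequences by properness, observe that the $\NN^{l,2\delta}$ control passes to the limits giving $Q(m)=Q(m')$, and conclude via continuity of $Q$. You simply spell out the diagonal argument, the verification that the pointwise limit is a DSG, and the topology used at each step in slightly more detail than the paper does.
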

\begin{proof}
Suppose that no such $l$ exists. Then there is a sequence $(m_l,m'_l)_{l\in\Nat}\subseteq \DD^2$ such that for every $l\in\Nat$ and $i\leq l$ we have $d_S(c_{m_l}^{1_G}(i),c_{m'_l}^{1_G}(i))\leq 2\delta$, yet $Q(m_l)$ and $Q(m'_l)$ do not lie in a common element of the open cover $(U_i)_{i=1}^n$. Passing to a subsequence if necesary, we may assume that both $(m_l)_l$ and $(m'_l)_l$ pointwise, and thus also in $\tau$, converge to some $m\in\DD$, resp. $m'\in\DD$. Since then $c_m^{1_G}$ and $c_{m'}^{1_G}$ are asymptotic, we have $Q(m)=Q(m')\in U_i$, for some $i\leq n$. Then by continuity $Q(m_l),Q(m'_l)\in U_i$ for large enough $l$, a contradiction.
\end{proof}

\begin{lemma}\label{lem:inverseLebesgue}
For every $l>8\delta$ there exists a finite open cover $(U_i)_{i=1}^n$ of $\partial G$ such that for every $i\leq n$ and $x,x'\in U_i$ we have $x'\in \NN_x^l$.
\end{lemma}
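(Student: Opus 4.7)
My approach is to cover $\partial G$ by basic neighborhoods $\NN_y^{l_0}$ for a sufficiently large $l_0$, and extract a finite subcover by compactness of $\partial G$. The key idea is that if two boundary points $x,x'$ both lie in the same $\NN_y^{l_0}$, then the triangle inequality forces their representative rays from $1_G$ to fellow-travel at distance $4\delta$ rather than $2\delta$; one must therefore choose $l_0 > l$ large enough so that Lemma~\ref{lem:fellowtravel} converts this $(l_0,4\delta)$-fellow travel into the desired $(l,2\delta)$-fellow travel.

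More precisely, given $l > 8\delta$, first apply Lemma~\ref{lem:fellowtravel} with parameters $l$ and $D=4\delta$ to obtain an integer $l_0$ such that any two geodesic rays stemming from the same point which $(l_0,4\delta)$-fellow travel in fact $(l,2\delta)$-fellow travel. The sets $\NN_y^{l_0}$, $y \in \partial G$, are open since they form a basis of the topology of $\partial G$, and they cover $\partial G$ because $y \in \NN_y^{l_0}$ for every $y$: any two DSGs $m,m'$ with $Q(m)=Q(m')=y$ yield asymptotic geodesic rays $c_m^{1_G}, c_{m'}^{1_G}$ stemming from the same point $1_G$, and Lemma~\ref{lem:closenessofgeo} applied with arbitrarily large $t$ gives $d_S(c_m^{1_G}(j),c_{m'}^{1_G}(j)) \leq 2\delta$ for every $j \in \Nat$. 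Compactness of $\partial G$ now yields a finite subcover $(U_i)_{i=1}^n := (\NN_{y_i}^{l_0})_{i=1}^n$.

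To verify the conclusion, fix $i \leq n$ and $x,x' \in U_i = \NN_{y_i}^{l_0}$. Pick arbitrary DSGs $m_{y_i}, m, m' \in \DD$ with $Q(m_{y_i}) = y_i$, $Q(m) = x$, $Q(m') = x'$. By the definition of $\NN_{y_i}^{l_0}$ we have $d_S(c_m^{1_G}(j),c_{m_{y_i}}^{1_G}(j)) \leq 2\delta$ and $d_S(c_{m'}^{1_G}(j),c_{m_{y_i}}^{1_G}(j)) \leq 2\delta$ for every $j \leq l_0$, so the triangle inequality gives $d_S(c_m^{1_G}(j),c_{m'}^{1_G}(j)) \leq 4\delta$ for every such $j$. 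Since $c_m^{1_G}$ and $c_{m'}^{1_G}$ stem from the same point $1_G$, the choice of $l_0$ forces them to $(l,2\delta)$-fellow travel, that is $m' \in \NN_m^{l,2\delta}$. Since $m,m'$ were arbitrary representatives of $x,x'$, this shows $x' \in \NN_x^l$, as required.

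The main obstacle is purely bookkeeping: keeping track of the slight degradation of the fellow-travel constant under the triangle inequality and invoking Lemma~\ref{lem:fellowtravel} with the correct parameters to absorb it. No genuinely new geometric content is needed beyond what has already been developed in the section.
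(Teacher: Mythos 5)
Your proof is correct and follows essentially the same route as the paper: apply Lemma~\ref{lem:fellowtravel} with $D=4\delta$ to obtain $l_0$ (the paper calls it $l'$), extract a finite subcover of $(\NN_y^{l_0})_{y\in\partial G}$ by compactness, and use the triangle inequality to upgrade membership in a common $\NN_{y_i}^{l_0}$ to the required $(l,2\delta)$-fellow travelling. The only difference is that you spell out the verification that $y\in\NN_y^{l_0}$ (so that these sets really do cover $\partial G$), which the paper leaves implicit.
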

\begin{proof}
By Lemma~\ref{lem:fellowtravel} there exists $l'\in\Nat$ such that every two geodesic rays stemming from the same point that $(l',4\delta)$-fellow travel actually $(l,2\delta)$-fellow travel. By compactness, the open cover $(\NN_x^{l'})_{x\in\partial G}$ has a finite subcover $(U_i)_{i=1}^n$. We claim it is as desired. Pick $i\leq n$, $x,x'\in U_i$, and any $m,m'\in\DD$ such that $Q(m)=x$, $Q(m')=x'$. Then by definition and triangle inequality $m'\in\NN_m^{l',4\delta}$, thus $m'\in\NN_m^l$. It follows that $x'\in\NN_x^l$.
\end{proof}

\section{Shadowing of $G\curvearrowright \partial G$}\label{sec:mainproof}

This section is devoted to the proof of Theorem~\ref{thm:main}. The idea of the proof is that given a pseudo-orbit $(x_g)_{g\in G}\subseteq \partial G$ one chooses geodesic rays representing the elements of $(x_g)_{g\in G}$ and starts gluing together finite segments of these rays to obtain a quasi-geodesic ray that determines the element whose orbit shadows $(x_g)_{g\in G}$. The precise proof of this idea is somewhat technical using the tools we have obtained in the previous section.\bigskip

Let $(U_i)_{i=1}^n$ be an open cover of $\partial G$. We need to find an open cover $(V_j)_j$ of $\partial G$ and a finite set $F\subseteq G$ such that for every $(F,(V_j)_j)$-pseudo-orbit $(x_g)_{g\in G}\subseteq \partial G$ there exists $x\in\partial G$ shadowing the pseudo-orbit, i.e. satisfying that for every $g\in G$ there is $i\leq n$ so that $gx,x_g\in U_i$.

For every $i\leq n$, set $U'_i:=Q^{-1}(U_i)$. Then $(U'_i)_{i=1}^n$ is a finite open cover of $\DD$ and by Lemma~\ref{lem:Lebesgue} there exists $l\in\Nat$ such that for all $m,m'\in\DD$ satisfying $m'\in\NN_m^{l,2\delta}$ there is $i\leq n$ so that $m,m'\in U'_i$. 

We now set several constants. First, let $\lambda\in\left[0,1\right)$, $\varepsilon\in\Nat_0$ and $k\in\Nat$ be the constants provided by Lemma~\ref{lem:localquasigeo} guaranteeing that every $k$-local $(1,2\delta)$-quasi-geodesic is a $(\lambda,\varepsilon)$-quasi-geodesic. Next let $K\in\Nat$ be the constant provided by Lemma~\ref{lem:Morse} that guarantees that every $(\lambda,\varepsilon)$-quasi-geodesic ray is $K$-close to some geodesic ray. We set \[J:=\max\{k+1,K+2\delta+l+1\}.\] Then we apply Proposition~\ref{prop:quasi-geo-diverging} to obtain the constants $\Delta_1:=\Delta_1(\delta,\lambda,\varepsilon,1)$ and $\Delta_2:=\Delta_2(\delta,\lambda,\varepsilon,1,2J+1)$. Finally, we set \[L:=\max\{\Delta_1+2+2J,J+1+\Delta_2\}.\]\smallskip

By Lemma~\ref{lem:inverseLebesgue} there exists an open cover $(V_j)_{j=1}^m$ of $\partial G$ such that for every $j\leq m$ and $x,x'\in V_j$ we have that $x'\in\NN_x^L$.\medskip

We claim that $(V_j)_j$ along with the finite set $F:=\{g\in G\colon d_S(g,1_G)\leq L\}\subseteq G$ are as desired. We prove it in the rest of this section.\medskip

Assume that $(x_g)_{g\in G}\subseteq \partial G$ is an $(F,(V_j))$-pseudo-orbit. For each $g\in G$ choose $m_g\in Q^{-1}(x_g)\in\DD$.

Notice that we have that for every $g\in G$ and $f\in F$, $fm_g\in\NN_{m_{fg}}^{L,2\delta}$.\smallskip

First we need two lemmas.

\begin{lemma}\label{lem:1stpseudoorbitlemma}
For every $g,h\in G$ satisfying $d_S(g,h)\leq L$ and every $t\leq L$ we have \[d_S(h c_{m_{h^{-1}}}^{1_G}(t),g c_{m_{g^{-1}}}^{1_G}(t))\leq d_S(h,g)+6\delta.\]
\end{lemma}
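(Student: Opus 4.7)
The plan is to invoke the pseudo-orbit hypothesis at a carefully chosen translation in order to relate the DSGs $m_{g^{-1}}$ and $m_{h^{-1}}$, then correct for a difference in starting points using the ``Consequently'' part of Proposition~\ref{prop:closenessofgeo}.

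First, I would set $s := h^{-1}g$, noting that $d_S(s, 1_G) = d_S(g, h) \leq L$ and hence $s \in F$. Applying the hypothesis ``$fm_g\in\NN_{m_{fg}}^{L,2\delta}$ for all $g \in G$ and $f \in F$'' with the index $g^{-1}$ in place of $g$ and the translation $s$ in place of $f$ gives
\[
s\,m_{g^{-1}} \in \NN_{m_{sg^{-1}}}^{L,2\delta} = \NN_{m_{h^{-1}}}^{L,2\delta},
\]
since $sg^{-1} = h^{-1}$. Lemma~\ref{lem:defofnbhds} then translates this containment into the pointwise distance estimate
\[
d_S\bigl(s\,c_{m_{g^{-1}}}^{s^{-1}}(i),\ c_{m_{h^{-1}}}^{1_G}(i)\bigr) \leq 2\delta \quad\text{for every } i \leq L.
\]
Left-multiplying both sides by the isometry $h$, and using $hs = g$ together with $s^{-1} = g^{-1}h$, this yields
\[
d_S\bigl(g\,c_{m_{g^{-1}}}^{g^{-1}h}(i),\ h\,c_{m_{h^{-1}}}^{1_G}(i)\bigr) \leq 2\delta.
\]

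Next I would replace the starting point $g^{-1}h$ by $1_G$ on the left. The rays $c_{m_{g^{-1}}}^{1_G}$ and $c_{m_{g^{-1}}}^{g^{-1}h}$ arise from the same DSG $m_{g^{-1}}$ and are therefore asymptotic, and their starting points are at distance $d_S(1_G, g^{-1}h) = d_S(g,h)$. The ``Consequently'' part of Proposition~\ref{prop:closenessofgeo} then bounds
\[
d_S\bigl(c_{m_{g^{-1}}}^{1_G}(t),\ c_{m_{g^{-1}}}^{g^{-1}h}(t)\bigr) \leq d_S(g,h) + 4\delta
\]
for every $t \in \Nat$; applying the left-multiplication-by-$g$ isometry and combining with the previous $2\delta$-bound via the triangle inequality yields
\[
d_S\bigl(h\,c_{m_{h^{-1}}}^{1_G}(t),\ g\,c_{m_{g^{-1}}}^{1_G}(t)\bigr) \leq d_S(g,h) + 6\delta
\]
for all $t \leq L$, as required.

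I do not expect any real conceptual obstacle here. The only care needed is in selecting $s = h^{-1}g$ so that the pseudo-orbit hypothesis produces precisely the pair of DSGs we want to compare, and in tracking indices through Lemma~\ref{lem:defofnbhds} when passing from the neighborhood condition to an inequality of the form $d_S(g\,c_{m_{g^{-1}}}^{\bullet}(i),\ h\,c_{m_{h^{-1}}}^{1_G}(i)) \leq 2\delta$ whose ``$\bullet$'' is the shifted basepoint $g^{-1}h$ rather than $1_G$.
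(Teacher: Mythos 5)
Your proof is correct and follows essentially the same route as the paper's: invoke the pseudo-orbit hypothesis on a translate in $F$, unpack it via Lemma~\ref{lem:defofnbhds}, correct the basepoint using the ``Consequently'' part of Proposition~\ref{prop:closenessofgeo}, and finish with the triangle inequality. The only cosmetic difference is that you apply the hypothesis with $f = h^{-1}g$ acting on $m_{g^{-1}}$ (and then shift the basepoint of the $m_{g^{-1}}$-ray), whereas the paper uses $f = g^{-1}h$ acting on $m_{h^{-1}}$ (shifting the $m_{h^{-1}}$-ray); this is just the symmetric relabeling $g \leftrightarrow h$, which is immaterial since the statement and the final bound are symmetric.
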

\begin{proof}
Since $g^{-1}h\in F$ we have $g^{-1}h m_{h^{-1}}\in\NN_{m_{g^{-1}}}^{L,2\delta}$. By Lemma~\ref{lem:defofnbhds}, this is equivalent with that for every $t\leq L$  \[d_S(g^{-1}h c_{m_{h^{-1}}}^{h^{-1}g}(t),c_{m_{g^{-1}}}^{1_G}(t))\leq 2\delta\] and multiplying both sides by $g$, which is isometric, we get for every $t\leq L$
 \[d_S(h c_{m_{h^{-1}}}^{h^{-1}g}(t),g c_{m_{g^{-1}}}^{1_G}(t))\leq 2\delta.\] Moreover, by Proposition~\ref{prop:closenessofgeo}, since $c_{m_{h^{-1}}}^{1_G}$ and $c_{m_{h^{-1}}}^{h^{-1}g}$ are asymptotic and $d_S(c_{m_{h^{-1}}}^{1_G}(1),c_{m_{h^{-1}}}^{h^{-1}g}(1))=d_S(1_G,h^{-1}g)$ we get that for every $t\leq L$ \[d_S(h c_{m_{h^{-1}}}^{1_G}(t),h c_{m_{h^{-1}}}^{h^{-1}g}(t))=d_S(c_{m_{h^{-1}}}^{1_G}(t),c_{m_{h^{-1}}}^{h^{-1}g}(t))\leq d_S(1_G,h^{-1}g)+4\delta.\] The lemma follows from the triangle inequality.
\end{proof}

\begin{lemma}\label{lem:2ndpseudoorbitlemma}
For every $g\in G$, $t\leq J+\Delta_2$ and $i\leq\Delta_2$ we have, after setting $f:=g c_{m_{g^{-1}}}^{1_G}(t)$, \[d_S(g c_{m_{g^{-1}}}^{1_G}(t+i),f c_{m_{f^{-1}}}^{1_G}(i))\leq 2\delta.\]
\end{lemma}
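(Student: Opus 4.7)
The plan is to derive the inequality by a single direct application of the pseudo-orbit hypothesis combined with the self-similarity of DSG rays. First, observe that the element $s:=f^{-1}g$ has $d_S(s,1_G)=d_S(f,g)\leq t\leq J+\Delta_2\leq L$, so $s\in F$. Since $s\cdot g^{-1}=f^{-1}$ and $(x_h)_{h\in G}$ is an $(F,(V_j))$-pseudo-orbit, the remark just before Lemma~\ref{lem:1stpseudoorbitlemma} yields
\[
(f^{-1}g)\cdot m_{g^{-1}}\in\NN_{m_{f^{-1}}}^{L,2\delta}.
\]

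Next, I would unfold this via Lemma~\ref{lem:defofnbhds} (applied with the group element playing the role of ``$g$'' there set to $f^{-1}g$, whose inverse is $g^{-1}f$), obtaining that for every $i\leq L$,
\[
d_S\!\left(f^{-1}g\cdot c_{m_{g^{-1}}}^{g^{-1}f}(i),\; c_{m_{f^{-1}}}^{1_G}(i)\right)\leq 2\delta.
\]
Since $i\leq\Delta_2\leq L$, this applies for the relevant range of $i$. Multiplying both arguments on the left by $f$, which is an isometry of $(G,d_S)$, converts the inequality to
\[
d_S\!\left(g\cdot c_{m_{g^{-1}}}^{g^{-1}f}(i),\; f\cdot c_{m_{f^{-1}}}^{1_G}(i)\right)\leq 2\delta.
\]

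The last step is the self-similarity identity $c_{m_{g^{-1}}}^{g^{-1}f}(i)=c_{m_{g^{-1}}}^{1_G}(t+i)$, which is immediate from the very definition of a DSG: by construction of $f$ we have $g^{-1}f=c_{m_{g^{-1}}}^{1_G}(t)$, and iterating $m_{g^{-1}}$ starting from this base point produces exactly the continuation past time $t$ of the ray iterated from $1_G$. Substituting this identification into the previous display gives the claimed inequality. The proof is essentially a computation; the only care needed is the bookkeeping of base points of the various rays (in particular noting $(f^{-1}g)^{-1}=g^{-1}f$) and verifying the numerical inequality $i\leq L$ so that Lemma~\ref{lem:defofnbhds} is applicable—there is no genuine conceptual obstacle.
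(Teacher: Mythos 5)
Your proof is correct and follows essentially the same route as the paper: use that $f^{-1}g\in F$ to get $(f^{-1}g)m_{g^{-1}}\in\NN_{m_{f^{-1}}}^{L,2\delta}$, unfold via Lemma~\ref{lem:defofnbhds}, left-multiply by $f$, and conclude by the DSG self-similarity identity $c_{m_{g^{-1}}}^{g^{-1}f}(\cdot)=c_{m_{g^{-1}}}^{1_G}(t+\cdot)$. No discrepancy beyond the harmless off-by-one in the indexing of $c_m^g$ (where $c_m^g(1)=g$), which is shared with the paper's own writeup.
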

\begin{proof}
We have $d_S(f,g)=t\leq J+\Delta_2\leq L$, so $f^{-1}g\in F$ and by the assumption we have $f^{-1}g m_{g^{-1}}\in\NN_{m_{f^{-1}}}^{L,2\delta}$. By Lemma~\ref{lem:defofnbhds}, this is equivalent that for every $i\leq L$ we have \[d_S(f^{-1}g c_{m_{g^{-1}}}^{g^{-1}f}(i),c_{m_{f^{-1}}}^{1_G}(i))\leq 2\delta\] and so by multiplying both sides by $f$, which is isometric, we get \[d_S(g c_{m_{g^{-1}}}^{g^{-1}f}(i), f c_{m_{f^{-1}}}^{1_G}(i))\leq 2\delta.\]

Since for all $n\in\Nat$ we have \[c_{m_{g^{-1}}}^{g^{-1}f}(n)=c_{m_{g^{-1}}}^{1_G}(t+n)\] the statement of the lemma follows.
\end{proof}

\noindent\textbf{Construction.} For every $g\in G$ we recursively define a map $c(g):\Nat\to G$ as follows.\medskip

\begin{enumerate}
	\item {\bf First step.} We set $c(g)(i):=c_{m_g}^{1_G}(i)$, for $i\leq J$.
	\item {\bf Second step.} set $h_2:=c(g)(J)\in F$ and for $J\leq i\leq 2J$ set \[c(g)(i):=h_2 c_{m_{h_2^{-1}g}}^{1_G}(i-J+1).\]
	\item {\bf Third general step.} Suppose that $c(g)$ has been defined on $[1,(r-1)J]$, for some $r\geq 3$, and $h_{r-1}\in F^{r-1}$ has been defined. Set $h_r:=c(g)((r-1)J)\in F^r$ and for $(r-1)J\leq i\leq rJ$ set \[c(g)(i):=h_r c_{m_{h_r^{-1}g}}^{1_G}(i-(r-1)J+1).\]
\end{enumerate}
Notice first that $c(g)(J)$ is well-defined and also for every $r\geq 1$ \[h_r c_{m_{h_r^{-1}g}}^{1_G}(J+1)=h_{r+1}c_{m_{h_{r+1}^{-1}g}}^{1_G}(1)\] thus $c(g)(rJ)$ is well-defined for every $r\in\Nat$.\medskip

\noindent{\bf Claim.} \textit{We have that $c(g)$ restricted to $[1,2J]$ and to  $[rJ,(r+2)J]$, for every $r\in\Nat$, is a $J'$-local $(1,2\delta)$-quasi geodesic, for every $k\leq J'\leq J$.}\smallskip

We only show the general case when $c(g)$ is restricted to $[rJ,(r+2)J]$, for $r\in \Nat$. The special case of the restriction to $[1,2J]$ is proved verbatim the same, the only difference is that the length of the interval $[1,2J]$ differs by one from the length of $[rJ,(r+2)J]$, for $r\in\Nat$.

Fix $r\in\Nat$. We already know that $h_r c_{m_{h_r^{-1}g}}^{1_G}(J+1)=h_{r+1} c_{m_{h_{r+1}^{-1}g}}^{1_G}(1)$ so it is enough to show that \[d_S\big(h_rc_{m_{h_r^{-1}g}}^{1_G}(J+t),h_{r+1} c_{m_{h_{r+1}^{-1}g}}^{1_G}(t)\big)\leq 2\delta\] for $t\leq k+1$, since then we are in position to apply Lemma~\ref{lem:gluingtolocalquasigeo} to reach the conclusion.

By the assumption and definition, we have $h_{r+1}^{-1}h_r m_{h_r^{-1}}g\in \NN_{m_{h_{r+1}^{-1}g}}^{L,2\delta}$, where $L> k$, since $d_S(h_{r+1}^{-1}h_r,1_G)=J\leq L$. By Lemma~\ref{lem:defofnbhds}, equivalently for every $t\leq L$ \[d_S\big(h_{r+1}^{-1}h_r c_{m_{h_r^{-1}g}}^{h_r^{-1}h_{r+1}}(t),c_{m_{h_{r+1}^{-1} g}}^{1_G}(t)\big)\leq 2\delta.\] Next we multiply by $h_{r+1}$ to get for every $t\leq L$ \[d_S\big(h_r c_{m_{h_r^{-1}g}}^{h_r^{-1}h_{r+1}}(t),h_{r+1} c_{m_{h_{r+1}^{-1} g}}^{1_G}(t)\big)\leq 2\delta.\] Finally, we realize, as in the proof of Lemma 4.2, that for every $t\leq L$ we have $c_{m_{h_r^{-1}g}}^{h_r^{-1}h_{r+1}}(t)=c_{m_{h_r^{-1}g}}^{1_G}(J+t)$, since $d_S(1_G,h_r^{-1}h_{r+1})=J$, and we get the desired \[d_S\big(h_rc_{m_{h_r^{-1}g}}^{1_G}(J+t),h_{r+1} c_{m_{h_{r+1}^{-1}g}}^{1_G}(t)\big)\leq 2\delta\]\smallskip

It follows that the whole $c(g)$ is a $k$-local $(1,2\delta)$-quasi geodesic. By Lemma~\ref{lem:localquasigeo}, $c(g)$ is a $(\lambda,\varepsilon)$-quasi geodesic. Applying Lemma~\ref{lem:Morse}, there exists a geodesic ray $r(g):\Nat\to G$ with $r(g)(1)=1_G$ that has Hausdorff distance less than $K$ to $c(g)$. Finally, applying Proposition~\ref{prop:existenceofDSG}, there exists $m(g)\in\DD$ such that the geodesic rays $r(g)$ and $c_{m(g)}^{1_G}$ are at bounded distance (at most $\delta$ since they stem from the same point).\smallskip

Now, since $c(g)\upharpoonright[1,J]=c_{m_g}^{1_G}\upharpoonright [1,J]$ we have \[d_S(c_{m_g}^{1_G}(J),\Img(c_{m(g)}^{1_G}))\leq K+\delta,\] thus by Lemma~\ref{lem:closenessofgeo}, \[d_S(c_{m_g}^{1_G}(t),c_{m(g)}^{1_G}(t))\leq 2\delta,\quad\text{for every }1\leq t\leq J-K-2\delta.\]
%for every $1\leq t\leq J-K-2\delta$ we have $d_S(c_{m_g}^{1_G}(t),c_{m(g)}^{1_G}(t))\leq 2\delta$.
Since $J-K-2\delta\geq l$ we get that $m(g)\in\NN_{m_g}^l$ and thus there is $i\leq n$ such that $Q(m(g)), x_g\in U_i$.\medskip

\begin{proposition}\label{prop:finalpartofmainproof}
For every $g,h\in G$, $c_{gh^{-1} m(h)}^{1_G}$ and $c_{m(g)}^{1_G}$ are asymptotic.
\end{proposition}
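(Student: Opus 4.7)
First, by telescoping, the statement reduces to the case $d_S(g,h) = 1$: writing $gh^{-1} = s_1 \cdots s_n$ with $s_i \in S$ and setting $g_k := s_{k+1}\cdots s_n h$ (so $g_0 = g$ and $g_n = h$), the special case applied to each adjacent pair $(g_{k-1}, g_k)$ chains to give $Q(m(g)) = s_1 \cdots s_n Q(m(h)) = gh^{-1}Q(m(h))$ in general. So henceforth assume $d_S(g,h) = 1$ and let $s := gh^{-1} \in S$. Since $c(g)$ is at bounded Hausdorff distance from $c_{m(g)}^{1_G}$ (by Lemma~\ref{lem:Morse} and Proposition~\ref{prop:existenceofDSG}) and $c_{gh^{-1}m(h)}^{1_G}$ is asymptotic to $sc_{m(h)}^{1_G}$ (by Proposition~\ref{prop:closenessofgeo}), it suffices to prove that the two $(\lambda,\varepsilon)$-quasi-geodesic rays $c(g)$ and $sc(h)$ are at bounded Hausdorff distance.

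Their initial points $1_G$ and $s$ lie at distance $1$, so Proposition~\ref{prop:quasi-geo-diverging} applied with $D := 1$ and $C := 2J+1$ produces exactly the constants $\Delta_1$ and $\Delta_2$ already fixed at the start of the section. The dichotomy says either $d_S(c(g)(t), \Img(sc(h))) \leq \Delta_1$ for every $t$ (in which case $c(g)$ and $sc(h)$ are Hausdorff-close and asymptotic, and we are done), or there exists $t_0$ with $d_S(c(g)(t_0), \Img(sc(h))) > \Delta_1$, in which case the proposition forces $d_S(c(g)(t_0 + \Delta_2), \Img(sc(h))) > \Delta_1 + 2J + 1 + 10\delta$. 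The remainder of the proof rules out the second alternative.

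To this end, introduce the waypoints $p_r := h_r^{(g)} = c(g)((r-1)J)$ and $q_r := s h_r^{(h)} = sc(h)((r-1)J)$, where $h_r^{(g)}$ and $h_r^{(h)}$ are the base points in the constructions of $c(g)$ and $c(h)$. Using the identity $sh = g$ one computes $(sh_r^{(h)})^{-1} g = (h_r^{(h)})^{-1} h$, so $(p_r)$ and $(q_r)$ satisfy the same recursion $x_{r+1} = x_r \cdot c_{m_{x_r^{-1}g}}^{1_G}(J+1)$ starting from $p_1 = 1_G$ and $q_1 = s$. Whenever $d_S(p_r, q_r) \leq L$, we have $q_r^{-1} p_r \in F$, and the pseudo-orbit hypothesis combined with Proposition~\ref{prop:closenessofgeo}---arguing exactly as in Lemma~\ref{lem:1stpseudoorbitlemma} but with $m_{p_r^{-1}g}$ and $m_{q_r^{-1}g}$ in place of $m_{g^{-1}}$ and $m_{h^{-1}}$---gives the tracking estimate $d_S(c(g)(t), sc(h)(t)) \leq d_S(p_r, q_r) + 6\delta$ for every $t \in [(r-1)J, rJ]$, and in particular the waypoint recursion $d_S(p_{r+1}, q_{r+1}) \leq d_S(p_r, q_r) + 6\delta$. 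The constraints $L \geq \Delta_1 + 2 + 2J$ and $L \geq J + 1 + \Delta_2$ in the setup are calibrated so that any divergence witness $t_0$ (and its shift $t_0 + \Delta_2$) falls inside a span of at most two consecutive segments $[(r-1)J, (r+1)J]$ on which the waypoints still satisfy $d_S(p_r, q_r) \leq L$, yielding an upper bound $\leq L + 6\delta$ on $d_S(c(g)(t), \Img(sc(h)))$ there which contradicts the demanded divergence $> \Delta_1 + 2J + 1 + 10\delta$. The main obstacle is precisely this calibration: one must verify that the $+6\delta$ inductive accumulation does not push the waypoint distance past $L$ before the first such $t_0$ is encountered, which is exactly what the careful choice of $L$ in the setup is engineered to achieve.
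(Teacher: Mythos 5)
Your overall architecture matches the paper: reduce to a distance-one pair, invoke Proposition~\ref{prop:quasi-geo-diverging} with $D=1$ and $C=2J+1$ to get the divergence dichotomy for the constructed quasi-geodesics, and rule out divergence by exploiting the waypoints $p_r,q_r$ together with the pseudo-orbit hypothesis. (The paper carries out the reduction to $d_S(g,h)=1$ via Zorn's lemma on a maximal connected subset containing $1_G$; your telescoping along a word for $gh^{-1}$ is an equivalent formulation.)

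The final contradiction step, however, contains a genuine gap. You set up the waypoint recursion $d_S(p_{r+1},q_{r+1})\leq d_S(p_r,q_r)+6\delta$ and then frame the main difficulty as showing that this $+6\delta$ accumulation does not exceed $L$ before the first divergence witness $t_0$ is reached, attributing this to the choice of $L$. But the accumulation grows linearly in $r$, so it will exceed any fixed $L$ after roughly $L/(6\delta)$ segments, and nothing prevents $t_0$ from occurring much later. No choice of $L$ can absorb this, and the paper's $L$ is not designed to. The correct mechanism is local and uses minimality of $t_0$ directly: since $t_0$ is the \emph{first} time with $d_S(c(t_0),\Img(s^{-1}c'))>\Delta_1$, one has $d_S(c(t_0-1),\Img(s^{-1}c'))\leq\Delta_1$ and hence $d_S(c(t_0),\Img(s^{-1}c'))=\Delta_1+1$ exactly. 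Taking $v$ attaining this and the base points $h,h'$ of the segments containing $c(t_0)$ and $c'(v)$ (so $t',v'\leq J$), one gets $d_S(h,s^{-1}h')\leq J+(\Delta_1+1)+J=\Delta_1+1+2J\leq L$ \emph{immediately}, with no accumulation, regardless of how many segments have elapsed. Lemma~\ref{lem:1stpseudoorbitlemma} then gives tracking error $\Delta_1+1+2J+6\delta$ along the DSG rays, and a \emph{single} application of Lemma~\ref{lem:2ndpseudoorbitlemma} (which spans the whole shift by $\Delta_2$, not merely ``two consecutive segments'') contributes $2\delta$ on each side, yielding $d_S(c(t_0+\Delta_2),\Img(s^{-1}c'))\leq\Delta_1+1+2J+10\delta$.

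This sharpness is also why your claimed bound $\leq L+6\delta$ does not close the argument: $L\geq J+1+\Delta_2$ can be far larger than the threshold $\Delta_1+2J+1+10\delta$ that Proposition~\ref{prop:quasi-geo-diverging} forces the divergence to exceed, so $L+6\delta$ need not produce a contradiction. Only the tight bound $\Delta_1+1+2J+10\delta$, obtained from minimality of $t_0$ rather than from any accumulation estimate, does.
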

Notice that as soon as we prove Proposition~\ref{prop:finalpartofmainproof} we are done. Indeed, then for any $g,h\in G$ we have \[x:=Q(m(1_G))=Q(g^{-1} m(g))=Q(h^{-1} m(h))\in \partial G\] is a well-defined element satisfying that for every $g\in G$ there is $i\leq n$ such that \[gx=gQ(m(1_G))=gQ(g^{-1}m(g))=Q(m(g)),x_g\in U_i.\]

\begin{proof}[Proof of Proposition~\ref{prop:finalpartofmainproof}]
Applying Zorn's lemma, let $M\subseteq G$ be a maximal connected subset containing $1_G$ such that for all $g,h\in M$, $c_{gh^{-1} m(h)}^{1_G}$ and $c_{m(g)}^{1_G}$ are asymptotic. If $M\neq G$, then there are $g\in M$ and $h\notin M$ such that $d_S(g,h)=1$. Thus it suffices to prove the claim for $g,h\in G$ satisfying $d_S(g,h)=1$. To ease the notation we shall prove it for $g=1_G$ and $h=s\in S$. The argument for general pairs having distance $1$ is the same.\medskip

Notice that it suffices to prove that the quasi-geodesic rays $c(1_G)$ and $s^{-1}c(s)$ are asymptotic. To further ease the notation, set $c:=c(1_G)$ and $c':=c(s)$. We recall that $\Delta_1$ and $\Delta_2$ be the constants provided by Proposition~\ref{prop:quasi-geo-diverging} depending on $\delta$, $d_S(c(1),c'(1))=1$, $\lambda$ and $C=2J+1$.

Either for every $t\in\Nat$ we have $d_S(c(t),\Img(s^{-1}c'))\leq \Delta_1$ and then $c$ and $s^{-1}c'$ are asymptotic, or there exists a minimal $t\in\Nat$ such that
 $d_S(c(t),\Img(s^{-1}c'))>\Delta_1$. The latter then implies, by Proposition~\ref{prop:quasi-geo-diverging}, that \[d_S(c(t+\Delta_2),\Img(s^{-1}c'))>\Delta_1+2J+1+10\delta\] which we shall show leads to a contradiction. Indeed, assume the latter. Let $v\in\Nat$ be such that $d_S(c(t),\Img(s^{-1}c'))=d_S(c(t),s^{-1}c'(v))>\Delta_1$. Since $t$ is minimal with this property, we have $d_S(c(t-1),\Img(s^{-1}c'))\leq \Delta_1$ and thus \[d_S(c(t),s^{-1}c'(v))=\Delta_1+1.\] By definition of $c$, resp. $c'$ there are $h,h'\in G$, $t',v'\in\Nat$ such that 
 \begin{itemize}
 	\item $c(t)=h c_{m_{h^{-1}}}^{1_G}(t')$ and $c'(v)=h' c_{m_{(h')^{-1}s}}^{1_G}(v')$,
 	\item $t'=d_S(h,c(t))\leq J$ and $v'=d_S(h',c'(v))\leq J$.
 
 \end{itemize}

Since \[\begin{split}d_S(h,s^{-1}h')&\leq  d_S(h,c(t))+d_S(c(t),s^{-1}c'(v))+d_S(s^{-1}c'(v),s^{-1}h')\\&=  d_S(h,c(t))+d_S(c(t),s^{-1}c'(v))+d_S(c'(v),h')\\ &\leq J+\Delta_1+1+J\leq L\end{split}\]

we have by Lemma~\ref{lem:1stpseudoorbitlemma} that \[d_S(h c_{m_{h^{-1}}}^{1_G}(r),s^{-1}h' c_{m_{(h')^{-1}s}}^{1_G}(r))\leq \Delta_1+1+2J+6\delta,\; \text{for all }r\leq L.\] In particular, since $t'+\Delta_2\leq L$, we have 
\begin{equation}\label{eq:1}
d_S(h c_{m_{h^{-1}}}^{1_G}(t'+\Delta_2),\Img(s^{-1}h' c_{m_{(h')^{-1}s}}^{1_G}))\leq \Delta_1+1+2J+6\delta.	
\end{equation}

Let $v''\in\Nat$ be such that 
 \begin{multline}\label{eq:2}
d_S(h c_{m_{h^{-1}}}^{1_G}(t'+\Delta_2),\Img(s^{-1}h' c_{m_{(h')^{-1}s}}^{1_G}))=\\d_S(h c_{m_{h^{-1}}}^{1_G}(t'+\Delta_2),s^{-1}h' c_{m_{(h')^{-1}s}}^{1_G}(v'+v'')). 
 \end{multline}
 
We now estimate \[d_S(c(t+\Delta_2),h c_{m_{h^{-1}}}^{1_G}(t'+\Delta_2))\] and \[d_S(c'(v+v''),h' c_{m_{(h')^{-1}s}}^{1_G}(v'+v'')).\]

We have that either $c(t+\Delta_2)=h c_{m_{h^{-1}}}^{1_G}(t'+\Delta_2)$ or there is $i\leq \Delta_2$ such that $c(t+\Delta_2)=f c_{m_{f^{-1}}}^{1_G}(\Delta_2-i)$, where $f=c(t+i)=c_{m_{h^{-1}}}^{1_G}(t'+i)$. In the former case, we have 

\begin{equation}\label{eq:3}
d_S(c(t+\Delta_2),h c_{m_{h^{-1}}}^{1_G}(t'+\Delta_2))=0,
\end{equation}

 so we assume the latter. We have $d_S(h,f)\leq d_S(h,c(t))+d_S(c(t),c(t+i))\leq J+\Delta_2\leq L$. However, then by Lemma~\ref{lem:2ndpseudoorbitlemma} we have 

 \begin{equation}\label{eq:4}
 d_S(c(t+\Delta_2),h c_{m_{h^{-1}}}^{1_G}(t'+\Delta_2)) =d_S(f c_{m_{f^{-1}}}^{1_G}(\Delta_2-i),h c_{m_{h^{-1}}}^{1_G}(t'+\Delta_2))\leq 2\delta. 
 \end{equation}

Analogously, we verify that $d_S(c'(v+v''),h c_{m_{(h')^{-1}s}}^{1_G}(v'+v''))\leq 2\delta$, thus by \eqref{eq:1}, \eqref{eq:2}, \eqref{eq:3}, \eqref{eq:4}, and triangle inequalities we obtain \[\begin{split} d_S(c(t+\Delta_2),\Img(s^{-1}c')) &\leq d_S(c(t+\Delta_2),h c_{m_{h^{-1}}}^{1_G}(t'+\Delta_2))\\ &+d_S(h c_{m_{h^{-1}}}^{1_G}(t'+\Delta_2),s^{-1}h' c_{m_{(h')^{-1}s}}^{1_G}(v'+v''))\\ &+d_S(s^{-1}h' c_{m_{(h')^{-1}s}}^{1_G}(v'+v''),s^{-1}c'(v+v''))\\ &\leq \Delta_1+1+2J+6\delta+4\delta=\Delta_1+1+2J+10\delta,\end{split}\] which is the desired contradiction.
\end{proof}

\bibliographystyle{siam}
\bibliography{references-hyperbolic}
\end{document}